\documentclass[11pt,letterpaper,reqno]{amsart}

\usepackage[T1]{fontenc}
\usepackage[utf8]{inputenc}
\usepackage{lmodern}
\usepackage{amsmath,amssymb,amsthm,mathtools,mathrsfs}
\usepackage{microtype}
\usepackage{mathtools}
\mathtoolsset{showonlyrefs}
\usepackage{esint}
\usepackage[numbers,sort&compress]{natbib}
\usepackage[colorlinks, citecolor=blue, linkcolor=red]{hyperref}

\allowdisplaybreaks[2]
\numberwithin{equation}{section}
\newtheorem{theorem}{Theorem}[section]
\newtheorem{proposition}[theorem]{Proposition}
\newtheorem{lemma}[theorem]{Lemma}
\newtheorem{corollary}[theorem]{Corollary}
\theoremstyle{remark}
\newtheorem{remark}[theorem]{Remark}

\newcommand{\R}{\mathbb R}
\newcommand{\Id}{\mathrm{Id}}
\newcommand{\diver}{\operatorname{div}}
\newcommand{\dist}{\operatorname{dist}}

\newcommand{\Hess}{\nabla^2}

\newcommand{\dd}{\,dx}
\newcommand{\ds}{\,d\mathcal H^{n-1}}

\title[The Gagliardo--Nirenberg equation]
{The Gagliardo--Nirenberg equation on Riemannian manifolds}

\author[G. Catino]{Giovanni Catino}
\address{G. Catino (corresponding author), Dipartimento di Matematica,
Politecnico di Milano, Piazza Leonardo da Vinci 32, 20133 Milano, Italy}
\email{giovanni.catino@polimi.it}
\author[D. D. Monticelli]{Dario D. Monticelli}
\address{D. D. Monticelli, Dipartimento di Matematica,
Politecnico di Milano, Piazza Leonardo da Vinci 32, 20133 Milano, Italy}
\email{dario.monticelli@polimi.it}
\author[A. Roncoroni]{Alberto Roncoroni}
\address{A. Roncoroni, Dipartimento di Matematica,
Politecnico di Milano, Piazza Leonardo da Vinci 32, 20133 Milano, Italy}
\email{alberto.roncoroni@polimi.it}

\date{}
\subjclass[2020]{Primary 35J61, 35B06, 35B38, 53C21}
\keywords{Gagliardo--Nirenberg inequalities, critical points,
positive solutions, radial symmetry, geometric rigidity, nonnegative Ricci curvature}
\hypersetup{
  pdftitle={Sharp annular rigidity for the Gagliardo--Nirenberg equation},
  pdfauthor={Giovanni Catino, Dario D. Monticelli, Alberto Roncoroni},
  pdfsubject={Classification at the linear annular-mass threshold},
  pdfkeywords={Gagliardo--Nirenberg equation, annular rigidity, reflection symmetry}
}

\begin{document}
\begin{abstract}
We establish geometric and analytic rigidity results for the
Gagliardo--Nirenberg equation. On a complete noncompact Riemannian
manifold with nonnegative Ricci curvature, the existence of a positive
critical point of the Gagliardo--Nirenberg quotient forces the manifold to be Euclidean and the solution to be extremal. No minimizing assumption or volume growth hypothesis is required, thereby strengthening earlier rigidity results. In Euclidean space, we prove that all positive solutions of the normalized equation whose nonlinear
mass on expanding annuli has sublinear growth are extremals. The linear threshold is optimal, as shown by explicit profiles that are constant
in one direction. We also prove a sharp pointwise estimate involving
the solution and its gradient, valid without assumptions at infinity,
with equality at one point characterizing the extremals.
The proofs use weighted divergence identities and a reflection
argument controlled by annular mass. Explicit constructions reveal the geometric and variational limits of rigidity: nonflat quotient critical points exist beyond the curvature hypothesis, while the normalized equation admits positive
solutions even under strictly positive sectional curvature and maximal volume growth.
\end{abstract}
\maketitle


\section{Introduction}\label{sec:introduction}

The classification of positive solutions to nonlinear elliptic
equations is closely connected with the equality cases of sharp
functional inequalities. An extremal satisfies an Euler--Lagrange
equation, but the converse requires a rigidity theorem: a critical
point need not minimize the functional. On a Riemannian manifold,
there is a further question of whether the existence of such a
solution determines the ambient geometry. In this paper, we address
these questions for the Gagliardo--Nirenberg equation, proving
rigidity for critical points under nonnegative Ricci
curvature, a sharp annular classification in Euclidean space, and a
pointwise estimate with rigidity in the equality case.

For equations with critical Sobolev growth, the model semilinear equation is 
\begin{equation}\label{eq_crit}
-\Delta u= u^{\frac{n+2}{n-2}} \, ,  \quad u>0 \qquad \text{in } \mathbb{R}^n\, ,     
\end{equation}
for $n\geq 3$. The classification of solutions of \eqref{eq_crit} has been proved in \cite{Obata,GNN} under an a priori decay assumption on $u$ and in \cite{CaffarelliGidasSpruck} (see also \cite{Chen-Li}) without additional assumptions via the method of moving planes. In the quasilinear setting, the equation is
\begin{equation}\label{eq_crtip}
-\Delta_p u =u^{p^\ast -1} \,,  \quad u>0 \qquad  \text{in } \mathbb{R}^n\, ,
\end{equation}
where 
$$
1<p<n\quad \text{and} \quad p^{\ast}=\frac{np}{n-p}\, .    
$$
The classification of finite energy solutions of \eqref{eq_crtip} was completed in \cite{Sciunzi,Vetois} (see also \cite{CFR}). Beyond the energy framework, very recently, the classification of solutions of \eqref{eq_crtip} has been obtained in \cite{Zhang} (see also \cite{CatinoMonticelliRoncoroni,CG,Ou,SunWang,Vetois_bis} for previous and partial results).  

On complete Riemannian manifolds with nonnegative Ricci curvature, these
questions combine analytic classification with geometric rigidity.
For the critical semilinear equation, classification and rigidity results were obtained in \cite{CatinoMonticelli,CFP,FogagnoloMalchiodiMazzieri,SunWang} under suitable assumptions (growth or finite energy) or when the Riemannian manifold has dimension at most five (see also \cite{Ou,Vetois_bis}). In each case, a nontrivial solution forces the manifold to be Euclidean. Differential identities involving a suitable
power of the solution, its Hessian, and the Ricci tensor make it
possible to obtain this conclusion without relying on Euclidean
reflection symmetries. Related rigidity phenomena have also been investigated beyond the Riemannian critical Sobolev setting. For singular or degenerate critical equations arising from Caffarelli–Kohn–Nirenberg inequalities, see e.g. \cite{CC,CMR,CP,DEL}. In CR and sub-Riemannian geometries, Liouville-type rigidity and classification results were obtained in the Heisenberg group in \cite{CatinoLiMonticelliRoncoroni,FVet,JerisonLee,Liu,Prajapat}, and on Sasakian manifolds in \cite{CatinoMonticelliRoncoroniWang}.
These results illustrate how identities adapted to the equation can turn
information about a single solution into rigidity of the underlying
space.

\

Our starting point is the family of \emph{sharp Gagliardo--Nirenberg
inequalities} proved in \cite{DelPinoDolbeault} (see also \cite{CorderoNazaretVillani, DELM}). Given $n\geq 2$, if  $1<p<\frac{n}{n-2}$\footnote{the upper endpoint is understood as $+\infty$ when $n=2$.} then the following inequality holds
$$
\left(\int_{\mathbb{R}^n}|f|^{2p}\, dx\right)^{\frac{1}{2p}}\leq C_{n,p} \left(\int_{\mathbb{R}^n}|\nabla f|^2\, dx \right)^{\frac{\tau}{2}} \left(\int_{\mathbb{R}^n}|f|^{p+1}\, dx\right)^{\frac{1-\tau}{p+1}}\, ,
$$
for all $f\in\mathcal D^p(\mathbb{R}^n)$ where 
$$
 \mathcal D^p(\mathbb{R}^n)
 :=\bigl\{f\in L^{p+1}(\mathbb{R}^n)\cap L^{2p}(\mathbb{R}^n)\, :\,
                |\nabla f|\in L^2(\mathbb{R}^n)\bigr\},
$$
and 
\begin{equation}\label{eq:critical-GN-parameters}
 \tau:=\frac{n(p-1)}{p[n+2-p(n-2)]}\in(0,1).
\end{equation}
Moreover, the positive extremals are given by 
$$
\left(\frac{1}{c_1+c_2|x-x_0|^2}\right)^{\frac{1}{p-1}}\,, 
$$
for some point $x_0\in\R^n$, and constants $c_1,c_2>0$.

On a complete Riemannian manifold $(M^n,g)$, define
\begin{equation}\label{eq:critical-GN-energy-space}
 \mathcal D^p(M)
 :=\bigl\{f\in L^{p+1}(M)\cap L^{2p}(M)\, :\,
                |\nabla f|\in L^2(M)\bigr\},
\end{equation}
with weak gradient and Lebesgue spaces taken with respect to the canonical volume form $d\mu_g$. The corresponding Gagliardo--Nirenberg quotient is
\begin{equation}\label{eq:critical-GN-quotient}
 \mathscr G_{n,p}[f]
 =\frac{\|\nabla f\|_{L^2(M)}^\tau
        \|f\|_{L^{p+1}(M)}^{1-\tau}}
       {\|f\|_{L^{2p}(M)}},
\end{equation}
defined for nonzero $f\in\mathcal D^p(M)$ with
$\|\nabla f\|_{L^2(M)}>0$. We call a smooth function $u$ in
this domain a critical point of $\mathscr G_{n,p}$ if
\[
 \left.\frac{d}{dt}\right|_{t=0}\mathscr G_{n,p}[u+t\varphi]=0
 \qquad\text{for every }\varphi\in C_c^\infty(M).
\]
Our first theorem classifies all such positive critical points and the
manifolds on which they can exist.

\begin{theorem}
\label{thm:riem-gn-critical-rigidity}
Let $(M^n,g)$, $n\ge2$, be a complete, noncompact Riemannian manifold with $\operatorname{Ric}_g\ge0$, and let $p$ satisfy
\begin{equation}\label{eq:range}
 1<p<\frac{n}{n-2}.
\end{equation}
If $u\in\mathcal D^p(M)$ is a positive critical
point of $\mathscr G_{n,p}$, then $(M,g)$ is isometric to $\R^n$ with its standard metric and 
\begin{equation}\label{eq:critical-GN-classification}
 u(x)=\left(\frac{1}{c_1+c_2|x-x_0|^2}\right)^{\frac{1}{p-1}}\,, 
\end{equation}
for some point $x_0\in\R^n$, and constants $c_1,c_2>0$.
\end{theorem}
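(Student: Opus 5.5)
The plan has four steps: derive the Euler--Lagrange equation and normalize it, pass to the pressure-type variable $v=u^{-(p-1)}$, prove an integral identity forcing $\nabla^2 v$ to be a multiple of $g$ and $\operatorname{Ric}(\nabla v,\nabla v)=0$, and conclude by Obata/Tashiro-type rigidity.

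First, derive the Euler--Lagrange equation. Differentiating $\log\mathscr G_{n,p}[u+t\varphi]$ at $t=0$ gives
\[
\frac{\tau}{\|\nabla u\|_2^2}\int\langle\nabla u,\nabla\varphi\rangle+\frac{1-\tau}{\|u\|_{p+1}^{p+1}}\int u^p\varphi-\frac{1}{\|u\|_{2p}^{2p}}\int u^{2p-1}\varphi=0 .
\]
So $u$ weakly solves $-a\Delta u+bu^p=u^{2p-1}$ with constants $a,b>0$. By elliptic regularity $u$ is smooth. Replacing $u$ by $\lambda u(\mu\,\cdot)$ (equivalently, scaling the metric) normalizes this to
\[
-\Delta u+u^p=u^{2p-1}.
\]
The scaling is harmless because $\operatorname{Ric}\ge0$ and flatness are scale invariant. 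We also record the Pohozaev-type relations among $\int|\nabla u|^2$, $\int u^{p+1}$ and $\int u^{2p}$. These hold on $\R^n$, but on $M$ only the relation obtained by testing with $u$ is available. This is why the proof must avoid relying on Pohozaev.

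Second, change variables. Set $v=u^{-(p-1)}$; for the extremals $v=c_1+c_2|x|^2$ is a quadratic polynomial. A computation turns the equation into one of the form
\[
v\Delta v=\alpha|\nabla v|^2+\beta v-\gamma ,
\]
with explicit $\alpha,\beta,\gamma$ depending on $n,p$ (and $\gamma>0$). The model solutions satisfy $\nabla^2 v=\frac{\Delta v}{n}g$. Following the approach of \cite{CatinoMonticelli,CatinoMonticelliRoncoroni} for the critical equation, we consider the traceless Hessian $\mathring{\nabla}^2 v$. We use Bochner's formula, together with the differentiated equation, to build a vector field $X=v^{-m}\bigl(\nabla^2 v(\nabla v,\cdot)-\frac{\Delta v}{n}\nabla v+\text{lower order}\bigr)$ with a suitably chosen exponent $m$. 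The goal is a pointwise identity
\[
\diver X=v^{-m}\Bigl(|\mathring{\nabla}^2 v|^2+\operatorname{Ric}(\nabla v,\nabla v)\Bigr)\cdot(\text{positive weight}).
\]
The algebra of choosing $m$ and the lower-order terms, in the range $1<p<\frac n{n-2}$, is routine but must be checked carefully.

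Third, integrate this identity against cutoffs $\eta_R^k$. This is the main obstacle: there is no volume growth hypothesis and no decay assumption beyond $u\in\mathcal D^p(M)$. I would first try to use Young's inequality to absorb the boundary term $\int X\cdot\nabla\eta_R^k$ into the good term. What remains is bounded by $R^{-2}\int_{B_{2R}\setminus B_R}v^{-m}|\nabla v|^2\,(\ldots)$. Choosing $m$ so that this integrand is comparable to $|\nabla u|^2+u^{p+1}+u^{2p}$, which is integrable, makes the remainder tend to $0$ without using volume growth. If $|\nabla v|^2/v$ cannot be controlled directly, a local gradient bound for $v$ (Cheng--Yau type, valid under $\operatorname{Ric}\ge0$) would supply $|\nabla v|^2\lesssim v$ up to constants. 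The conclusion of this step is $\mathring{\nabla}^2 v\equiv0$ and $\operatorname{Ric}(\nabla v,\nabla v)\equiv0$.

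Fourth, conclude. From $\nabla^2 v=\frac{\Delta v}{n}g$ and the equation, $\Delta v$ is constant. Otherwise Tashiro/Obata-type identities, combined with $\operatorname{Ric}\ge0$, give a contradiction. Thus $\nabla^2 v=c\,g$ with $c>0$, since $v$ is positive and nonconstant ($u$ is in $L^{2p}$ on a noncompact manifold of infinite volume, by Calabi--Yau). By Tashiro's theorem, $M$ is isometric to $\R^n$ and $v=c_1+c_2|x-x_0|^2$. This yields \eqref{eq:critical-GN-classification}, after undoing the scaling.
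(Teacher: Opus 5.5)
There is a genuine gap, and it starts in your Step~1. You assert that on $M$ only the relation obtained by testing with $u$ is available, and you then discard the constants $a,b$. But quotient criticality fixes them: $a=\tau I_3/I_1$ and $b=(1-\tau)I_3/I_2$, where $I_1=\int_M|\nabla u|^2$, $I_2=\int_M u^{p+1}$ and $I_3=\int_M u^{2p}$. The quotient is invariant under $u\mapsto cu$, $g\mapsto kg$, so after normalizing to $-\Delta u=u^{2p-1}-u^p$ the function is still critical. Since $u^{2p-1}$ and $u^p$ are linearly independent for nonconstant $u$, you get two relations: $I_1/I_2=\tau/(1-\tau)$ and $I_1/I_3=\tau$. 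Testing with $u$ only gives $I_3=I_1+I_2$, which already follows from these two. These relations are what replaces Pohozaev on $M$. Writing $P=\frac D{2p}u^{p-1}+\frac D2|\nabla u|^2u^{-(p+1)}$, they say exactly that $\int_M u^{p+1}(P-1)\,d\mu_g=0$. This is the ingredient your plan is missing.

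Without it, Steps~2--3 cannot succeed, because the lower-order term $u^p$ breaks the conformal structure behind \cite{CatinoMonticelli}. Set $v=u^{1-p}$ and $T_g=\nabla^2v-\frac{(p-1)^2}{D}P\,g$. The natural identity is then
\[
\operatorname{div}_g(u^2\nabla P)=u^{p+1}\Bigl[\frac D{(p-1)^2}\bigl(|T_g|^2+\operatorname{Ric}_g(\nabla v,\nabla v)\bigr)+(p-1)P(P-1)\Bigr],
\]
and its last term has no sign. The paper writes $P(P-1)=(P-1)^2+(P-1)$ and removes the linear part using the zero-mean relation. It bounds the cutoff remainder by $C\int_{M\setminus B_R}|\nabla u|^2$, using $v\le C(1+r^2)$, which comes from a barrier and Laplacian comparison. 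This gives $T_g\equiv0$, $P\equiv1$ and $\operatorname{Ric}(\nabla v,\nabla v)\equiv0$, and Tashiro's theorem concludes.

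No choice of $m$ and lower-order terms can give an identity of your form whose remainder is controlled by $\mathcal D^p$-integrability alone. Proposition~\ref{prop:positive-avr-counterexamples} provides $(\R^n,g_c)$ with strictly positive sectional curvature and a positive radial solution $u_c\in\mathcal D^p$ of the normalized equation with $u_c'<0$. Hence $\operatorname{Ric}(\nabla v,\nabla v)>0$ away from the pole. Every hypothesis your Steps~2--3 use holds for $u_c$: the equation, $\operatorname{Ric}\ge0$, finiteness of $I_1,I_2,I_3$, and $v\lesssim 1+r^2$. So your argument would prove a false statement. What excludes $u_c$ is exactly that it is not a critical point of $\mathscr G_{n,p}$, i.e.\ it violates the zero-mean relation.
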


Thus every positive critical point is an extremal, and its existence
already determines both the metric and the function. This extends
the Euclidean extremal classification in \cite[Theorem 1]{DelPinoDolbeault} to a
geometric rigidity statement for arbitrary positive critical points.
It also strengthens, for the family considered here, the rigidity
associated with sharp Gagliardo--Nirenberg inequalities on Riemannian manifolds.
The connection between Euclidean best constants and geometric
rigidity goes back to Ledoux \cite{Ledoux} for Sobolev inequalities
and to Xia \cite{Xia} for Gagliardo--Nirenberg inequalities.
Moreover, in \cite[Theorem 3.1]{BaloghKristaly} the authors determined sharp Gagliardo--Nirenberg inequalities and geometric rigidity on complete, noncompact Riemannian manifolds satisfying
$$
\mathrm{Ric}_g \geq 0 \quad \text{and} \quad \mathrm{AVR}_g:=\lim_{r\to+\infty}\frac{\mathrm{Vol}_g(B_r(o))}{\omega_n r^n}>0\, ,
$$
where $o\in M$ is some fixed point and $\omega_n$ denotes the volume of the unit ball in $\mathbb{R}^n$. 

The proof of Theorem \ref{thm:riem-gn-critical-rigidity} uses the integral relations supplied by quotient
criticality. After normalization, these relations give a weighted
mean constraint for the auxiliary function
$$
v:=u^{1-p}\, . 
$$
Combined with a divergence identity (see \eqref{eq:riem-gn-P-divergence} below), this constraint forces the traceless Hessian of $v$ and the Ricci contribution to vanish from which the rigidity follows. 

The curvature condition is essential. In
Proposition~\ref{ex:explicit-GN-all-dimensions} below we construct, for every $n\geq 2$ and $1<p<\frac{n}{n-2}$, a complete nonflat warped product carrying an explicit positive critical point in $\mathcal D^p(M)$.
These manifolds have two ends, negative Ricci curvature in the radial
direction, and polynomial volume growth of order greater than their
dimension. The examples show that quotient criticality alone does
not impose Euclidean geometry when the curvature condition is
removed.

\

Our second theorem concerns positive entire solutions in Euclidean
space without assuming membership in the energy space $\mathcal{D}^p(\mathbb{R}^n)$. We consider
the \emph{normalized Gagliardo-Nirenberg equation}
\begin{equation}\label{eq:main}
 -\Delta u=u^{2p-1}-u^p\,, \quad u>0 \qquad\text{in }\R^n\, ,
\end{equation}
where $n\geq 2$ and $1<p<\frac{n}{n-2}$. The Euler--Lagrange equation to \eqref{eq:critical-GN-quotient} with $M=\mathbb{R}^n$ can
be brought to this form by rescaling. The normalized profiles are
\begin{equation}\label{eq:profile}
 \mathcal U_{x_0}(x)=
 \left(\frac{D}{2p}+\frac{(p-1)^2}{2D}|x-x_0|^2\right)^{-\frac{1}{p-1}}\, , 
 \qquad x_0\in\R^n,
\end{equation}
where 
\begin{equation}\label{eq:constants}
 D:=2p-n(p-1)=n-p(n-2)>0\,  ,
\end{equation}
and only translations remain free in this normalization. The equation \eqref{eq:main} also admits solutions of a different kind, including constants, periodic solutions, and profiles depending on fewer than
$n$ variables. A classification therefore requires a condition that
excludes these solutions. We impose an integral condition on annuli. For $R>0$, we define the \emph{annular mass} of $u$, i.e. 
\begin{equation}\label{eq:mass}
 M(R):=\int_{A_R}u^{p+1}\dd \, ,
\end{equation}
where 
$$
A_R:=B_{2R}(0)\setminus B_R(0) \, . 
$$
In the next theorem we prove rigidity under sublinear growth of $M(R)$.

\begin{theorem}\label{thm:main}
Let $u\in C^2(\R^n)$ be a solution of \eqref{eq:main}
with $1<p<\frac{n}{n-2}$. If
\begin{equation}\label{eq:main-hyp}
 \liminf_{R\to\infty}\frac{M(R)}{R}=0,
\end{equation}
then $u=\mathcal U_{x_0}$ for some $x_0\in\R^n$.

Conversely, every profile \eqref{eq:profile} solves \eqref{eq:main}
and satisfies $M(R)=O(R^{-s})$ as $R\to\infty$, where 
\begin{equation}\label{eq:s}
 s=\frac{2(p+1)}{p-1}-n=\frac{D+2}{p-1}>0.
\end{equation}
\end{theorem}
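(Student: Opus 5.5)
The converse part is a direct computation, so I would dispose of it first. I would set $v=\frac{D}{2p}+\frac{(p-1)^2}{2D}|x-x_0|^2$ and $u=v^{-1/(p-1)}$. Then $\nabla^2 v=\frac{(p-1)^2}{D}\Id$, and one checks that
\[
-\Delta u=\frac{1}{p-1}u^{p}\Delta v-\frac{p}{(p-1)^2}u^{2p-1}|\nabla v|^2 .
\]
Using $|\nabla v|^2=\frac{2(p-1)^2}{D}\bigl(v-\frac{D}{2p}\bigr)$ and $u^{p}=u^{2p-1}v$, the right-hand side reduces to $u^{2p-1}-u^p$, and this is exactly how $D$ is fixed. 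For the mass, $u^{p+1}\sim c|x|^{-2(p+1)/(p-1)}$, so
\[
M(R)\lesssim R^{n}R^{-2(p+1)/(p-1)}=R^{-s}.
\]
Moreover $s>0$ is equivalent to $p<\frac{n}{n-2}$, which gives $s=(D+2)/(p-1)$.

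For the direct part, I would follow the strategy announced for Theorem \ref{thm:riem-gn-critical-rigidity}. Set $v=u^{1-p}>0$. The equation becomes
\[
v\Delta v=\frac{p}{p-1}|\nabla v|^2-(p-1)+(p-1)v ,
\]
a Cheng--Yau / Serrin--Zou type equation. The goal is to find a vector field $P$, built from $u^{\alpha}$ times $\mathring{\nabla}^2 v\,\nabla v$ plus lower-order terms, with a divergence identity of the form
\[
\diver P = u^{\beta}\,|\mathring{\nabla}^2 v|^2\,(\text{positive weight}) + (\text{terms that vanish or have a sign}),
\]
where $\mathring{\nabla}^2 v$ is the traceless Hessian. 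This is the Euclidean case of \eqref{eq:riem-gn-P-divergence} with $\operatorname{Ric}=0$. In the critical point theorem the weighted mean constraint kills the leftover term via integral identities. Here we lack integrability, so instead I would test with cutoffs $\eta_R$ supported in $B_{2R}$ with $|\nabla \eta_R|\le C/R$ on $A_R$. That gives
\[
\int u^{\beta}|\mathring\nabla^2 v|^2\eta_R \le \frac{C}{R}\int_{A_R}|P| .
\]
I would then bound $|P|$ pointwise by a multiple of $u^{p+1}$. The tool is a gradient estimate of the type $|\nabla v|^2\le C(1+v)$, which is the pointwise estimate mentioned in the abstract and can be proved by a maximum principle/P-function argument on $|\nabla v|^2/v$. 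Combined with interior Hessian estimates, this yields $|P|\le C u^{p+1}$. The right-hand side is then $CM(R)/R$, and along a sequence realizing \eqref{eq:main-hyp} we get $\mathring\nabla^2 v\equiv 0$.

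If the remainder term is not signed, the fallback is the reflection argument mentioned in the abstract. I would moving-plane in each direction, using the annular mass to control the boundary terms at infinity, to get radial symmetry about some $x_0$, and then conclude by ODE analysis.

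Once $\mathring\nabla^2 v=0$, we have $\nabla^2 v=\lambda(x)\Id$, and in $n\ge2$ $\lambda$ is constant. Hence $v=a+b|x-x_0|^2$, or $v$ is affine if $b=0$. An affine positive $v$ on $\R^n$ must be constant, and the constant solution $u\equiv1$ has $M(R)\sim R^n$, violating \eqref{eq:main-hyp}. Substituting $v=a+b|x-x_0|^2$ into the $v$-equation forces $a=D/(2p)$ and $b=(p-1)^2/(2D)$, so $u=\mathcal U_{x_0}$.

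I expect the main obstacle to be the pointwise control of the flux $|P|$ on annuli by $u^{p+1}$, with exactly one power of $R$ gained, and with no decay assumption at all. This is precisely what makes the linear threshold sharp: the one-dimensional profiles have $M(R)\sim R$, so every step must be lossless.
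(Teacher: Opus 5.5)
There is a genuine gap: your $P$-function route cannot close at the linear threshold, and the fallback omits the mechanism that actually produces symmetry.

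\textbf{The $P$-function route.} In $\R^n$ the identity reads $\diver(u^2\nabla P)=u^{p+1}\bigl[\frac{D}{(p-1)^2}|T|^2+(p-1)P(P-1)\bigr]$. The pointwise estimate you want to use (Theorem~\ref{thm:sharp}) gives $P\le1$, so the remainder is signed, but with the wrong sign. Testing with $\eta_R^2$ yields only
\[
\frac{D}{2(p-1)^2}\int\eta_R^2u^{p+1}|T|^2\le(p-1)\int\eta_R^2u^{p+1}P(1-P)+\frac{C}{R^2}\int_{A_R}u^2 ,
\]
which says nothing about $T$. This is structural, not technical. The constant $u\equiv1$ and the cylindrical profiles $\mathcal U_m$ satisfy the same identity with $P<1$ and $\operatorname{tr}T=(p-1)(P-1)\neq0$.

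Killing the remainder requires the global constraint $\int u^{p+1}(P-1)=0$, which in Theorem~\ref{thm:riem-gn-critical-rigidity} came from quotient criticality. In $\R^n$ you can write $u^{p+1}(P-1)$ as the divergence of a combination of $u\nabla u$ and the dilation Pohozaev field. However, that field carries the weight $x$, of size $R$ on $A_R$, so its cutoff flux is of order $M(R)$, not $M(R)/R$. It therefore needs $M(R)\to0$ and integrability, which is essentially the decay that belongs to the conclusion.

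You also misplace the difficulty. The flux $u^2\nabla P=\frac{D}{(p-1)^2}u^{p+1}T(\nabla v)$ is indeed not pointwise $O(u^{p+1})$, since $|\nabla v|\sim\sqrt v$ grows linearly. But after Young's inequality it costs $\frac{C}{R^2}\int_{A_R}u^{p+1}v=\frac{C}{R^2}\int_{A_R}u^2\le C\bigl(M(R)/R\bigr)^{2/(p+1)}R^{-D/(p+1)}$, which is harmless.

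\textbf{The fallback.} Classical moving planes need decay at infinity to start the plane, and there are no ``boundary terms at infinity'' for $M(R)$ to control. The paper's argument runs as follows.
\begin{enumerate}
\item For an arbitrary hyperplane $\Pi$ with half-space $H$, it builds by sub- and supersolutions a solution $z$ in $H$ with $0<z\le\min\{u,u^\sigma\}$ and $z=u$ on $\Pi$.
\item The difference $J=\mathcal J_e[z]-\mathcal J_e[u]$ of \emph{translational} Pucci--Serrin fields is divergence free in $H$, with normal component $\frac12\delta_+\delta_-\ge0$ on $\Pi$.
\item Because the weight is the unit vector $e$ rather than $x$, a Caccioppoli estimate bounds the cutoff flux by $C\bigl(M(R)/R+R^{-3}\int_{A_R}u^2\bigr)$. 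This is exactly where the linear threshold enters.
\item Hence $\delta_+\delta_-=0$, and Hopf's lemma gives $u\le u^\sigma$ or $u\ge u^\sigma$ in $H$, for every hyperplane.
\item Zeros of $\partial_eu$ then give symmetry planes. The case $\partial_eu>0$ everywhere is excluded because it forces $M(R)\gtrsim R$, and periodic radial profiles are excluded by $M(R)\gtrsim R^n$.
\item The resulting decreasing radial ground state is identified by Serrin--Tang and Pucci--Serrin uniqueness.
\end{enumerate}
Once decay is established, your identity does become useful. The Pohozaev and energy fluxes are then $O(R^{-s})$, so $\int u^{p+1}(P-1)=0$; together with $P\le1$ this forces $P\equiv1$ and $T\equiv0$, an alternative to citing radial uniqueness.

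\textbf{Minor slip in the converse.} $s>0$ is equivalent to $p<\frac{n+2}{n-2}$, not to $p<\frac{n}{n-2}$. Here positivity follows from $D>0$.
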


Condition~\eqref{eq:main-hyp} only asks that the normalized annular
mass tend to zero along a sequence of radii. In particular,
$M(R)=O(R^\theta)$ for any $\theta<1$ is sufficient. No minimizing,
stability, finite-energy, or pointwise decay assumption is imposed.
In fact, finite energy and decay follow from the annular condition,
as made explicit in the following consequence.

\begin{corollary}\label{cor:finite-mass}
Let $u\in C^2(\R^n)$ be a solution of \eqref{eq:main} with $1<p<\frac{n}{n-2}$. The following conditions are equivalent:
\begin{enumerate}
\item[\textup{(i)}] $\displaystyle
 \liminf_{R\to\infty}\frac{M(R)}{R}=0$;
\item[\textup{(ii)}] $u\in L^{p+1}(\R^n)$;
\item[\textup{(iii)}] $u\in\mathcal D^p(\R^n)$;
\item[\textup{(iv)}] $u(x)\to0$ as $|x|\to\infty$;
\item[\textup{(v)}] $u=\mathcal U_{x_0}$ for some $x_0\in\R^n$.
\end{enumerate}
Moreover, a nonnegative classical solution satisfying any of
\textup{(i)}--\textup{(iv)} is either identically zero or a profile
\eqref{eq:profile}.
\end{corollary}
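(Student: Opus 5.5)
We prove the equivalences through the cycle (v)$\Rightarrow$(iii)$\Rightarrow$(ii)$\Rightarrow$(i)$\Rightarrow$(v), and then attach (iv) separately. Theorem~\ref{thm:main} provides (i)$\Rightarrow$(v) directly, so the work is in checking the easy implications and in handling (iv).

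\textbf{(v)$\Rightarrow$(iii).} The profile \eqref{eq:profile} behaves like $|x|^{-2/(p-1)}$ at infinity and $|\nabla\mathcal U_{x_0}|$ like $|x|^{-2/(p-1)-1}$. Integrability of $u^{p+1}$ needs $2(p+1)/(p-1)>n$, which is exactly $s>0$ in \eqref{eq:s}. Integrability of $u^{2p}$ needs $4p/(p-1)>n$, which is weaker. Integrability of $|\nabla u|^2$ needs $2(p+1)/(p-1)>n$, the same condition as for $u^{p+1}$.

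\textbf{(iii)$\Rightarrow$(ii)} holds by definition of $\mathcal D^p(\R^n)$.

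\textbf{(ii)$\Rightarrow$(i).} We have $M(R)\le\int_{|x|\ge R}u^{p+1}\dd\to0$. In particular $M(R)/R\to0$.

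\textbf{(iv)$\Rightarrow$(i).} This is the only implication needing a real argument. Since $u\to0$, for $|x|\ge R_0$ we have $u^{p-1}\le 1/2$. Hence $-\Delta u=u^p(u^{p-1}-1)\le -\tfrac12 u^p$, so $u$ is a positive subsolution of $\Delta u\ge\tfrac12 u^p$ in an exterior domain with $p>1$.

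We compare with Keller--Osserman-type barriers. On each ball $B_{|x|/2}(x)$ the radial explosive solution $W$ of $\Delta W=\tfrac12 W^p$ satisfies $W\le C|x|^{-2/(p-1)}$ at the centre. The comparison principle, which holds because $t\mapsto t^p$ is increasing, then gives
\[
u(x)\le C|x|^{-2/(p-1)}\qquad\text{for }|x|\text{ large}.
\]
Consequently
\[
M(R)\le C R^{\,n-2(p+1)/(p-1)}=CR^{-s}\to0,
\]
which gives (i), and in fact (ii) as well.

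\textbf{(v)$\Rightarrow$(iv)} is immediate from the explicit form of the profile.

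The main obstacle is the barrier comparison in (iv)$\Rightarrow$(i). We would use the explicit barrier
\[
W(y)=c\bigl(\rho^2-|y-x|^2\bigr)^{-2/(p-1)}\quad\text{on }B_\rho(x),
\]
with $c$ chosen large so that $\Delta W\le\tfrac12 W^p$, making $W$ a supersolution. Since $W=+\infty$ on $\partial B_\rho(x)$, a maximum-principle argument on $\{u>W\}$ shows $u\le W$ inside, and at the centre this gives $u(x)\le C\rho^{-2/(p-1)}$ with $\rho=|x|/2$.

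\textbf{Nonnegative solutions.} Let $u\ge0$ be a classical solution satisfying one of (i)--(iv). Since $u^{2p-1}-u^p$ is locally Lipschitz in $u$ for $u\ge0$ (as $p>1$), we can write
\[
-\Delta u+c(x)u=0,\qquad c=u^{p-1}-u^{2p-2}\ \text{bounded on compacts}.
\]
The strong maximum principle then shows that either $u\equiv0$ or $u>0$ everywhere. In the second case the equivalences already proved give $u=\mathcal U_{x_0}$ for some $x_0\in\R^n$.
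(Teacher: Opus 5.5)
Your proposal is correct and follows essentially the same route as the paper: the same chain of implications with Theorem~\ref{thm:main} supplying (i)$\Rightarrow$(v), and the local Keller--Osserman bound $u(x)\le C|x|^{-2/(p-1)}$ on balls of radius comparable to $|x|$ giving (iv)$\Rightarrow$(i) with $M(R)=O(R^{-s})$. As in the paper, the strong maximum principle handles nonnegative solutions. The only difference is that you prove the Keller--Osserman estimate directly via the explicit barrier $c(\rho^2-|y-x|^2)^{-2/(p-1)}$ with $c\sim\rho^{2/(p-1)}$, whereas the paper quotes it.
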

The threshold in Theorem~\ref{thm:main} is optimal. A normalized
profile in $m<n$ variables solves \eqref{eq:main} and has annular mass
asymptotic to $R^{n-m}$; see
Proposition~\ref{prop:cylinders} below for further details. In particular, taking $m=n-1$ gives a solution
with exactly linear annular mass that is not a profile in $n$
variables. Consequently, condition~\eqref{eq:main-hyp}
cannot be replaced by $M(R)=O(R)$. Constant and positive periodic
solutions, described in Remark~\ref{ex:periodic} below, give further
examples with annular mass comparable to $R^n$.

The Euclidean assumption in Theorem~\ref{thm:main} is unavoidable. In 
Proposition~\ref{prop:positive-avr-counterexamples} below we construct
complete metrics with strictly positive sectional curvature
and any prescribed asymptotic volume ratio in $(0,1)$, supporting
positive radial solutions of the normalized equation with optimal decay at infinity. This construction is not in contrast with Theorem~\ref{thm:riem-gn-critical-rigidity}: solving the normalized
equation does not imply criticality of the Gagliardo-Nirenberg quotient. 

The proof of Theorem~\ref{thm:main} derives symmetry and decay from
the annular condition through a reflection argument. Ordering for
every hyperplane gives radial symmetry and decay. The uniqueness
results in \cite{PucciSerrinUniqueness,SerrinTang} (see also
\cite[proof of Theorem~4]{DelPinoDolbeault}) then identify the
profile. In contrast with the classical moving planes method starting from decay at infinity, decay is obtained here as part of the conclusion.

\

Finally, we also prove the following Euclidean sharp estimate for
every positive entire solution.

\begin{theorem}\label{thm:sharp}
Every solution $u\in C^2(\mathbb{R}^n)$ of \eqref{eq:main} with $1<p<\frac{n}{n-2}$ satisfies
\begin{equation}\label{eq:sharp-u}
 \frac{D}{2p}u^{p-1}
 +\frac{D}{2}\frac{|\nabla u|^2}{u^{p+1}}\le1
 \quad\text{in }\R^n.
\end{equation}
Furthermore, if the equality holds at one point then $u=\mathcal U_{x_0}$ for some
$x_0\in\R^n$.
\end{theorem}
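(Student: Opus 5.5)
The plan is to pass to the auxiliary function $v:=u^{1-p}$ used in the proof of Theorem~\ref{thm:riem-gn-critical-rigidity}. Write $u=v^{-1/(p-1)}$. A direct computation shows that \eqref{eq:main} becomes
\[
\Delta v=\frac{p}{p-1}\frac{|\nabla v|^2}{v}+(p-1)\Bigl(\frac1v-1\Bigr).
\]
The left-hand side of \eqref{eq:sharp-u} equals
\[
\frac{1}{v}\Bigl(\frac{D}{2p}+\frac{D}{2(p-1)^2}|\nabla v|^2\Bigr),
\]
since $u^{p-1}=1/v$ and $|\nabla u|^2/u^{p+1}=|\nabla v|^2/((p-1)^2v)$. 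So \eqref{eq:sharp-u} is equivalent to $F\le 0$, where
\[
F:=\frac{D}{2p}+\frac{D}{2(p-1)^2}|\nabla v|^2-v.
\]
For the profile \eqref{eq:profile} we have $v=\frac{D}{2p}+\frac{(p-1)^2}{2D}|x-x_0|^2$, and one checks that $F\equiv0$. This is consistent with the equality statement and suggests that $F$ is the right "P-function".

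\textbf{Step 1: Bochner computation.} Using Bochner's formula in $\R^n$ and the equation for $v$, I would compute $\Delta F$. I would split $\Hess v=\mathring{\Hess}v+\frac{\Delta v}{n}\Id$ and substitute $\Delta v$ from the equation. The goal is a differential inequality of the form
\[
v\,\Delta F-\beta\,\nabla v\cdot\nabla F\;\ge\; c_1\,v\,|\mathring{\Hess}v|^2+c_2\,\frac{F_+^2}{v}\cdot(\text{positive factor}),
\]
for suitable constants $\beta$ and $c_1,c_2>0$ depending on $n,p$. The gradient terms that have no sign should be absorbed into $\nabla F$ via the identity $\nabla F=\frac{D}{(p-1)^2}\Hess v\,\nabla v-\nabla v$. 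The range $1<p<\frac{n}{n-2}$, i.e.\ $D>0$, should be exactly what makes the remaining coefficients nonnegative.

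\textbf{Step 2: Localization.} The key point is that no behaviour at infinity is assumed, so a global maximum principle is not available. Instead I would exploit the superlinear absorption term $F_+^2$ in a Keller--Osserman argument. Take a cutoff $\eta$ supported in $B_{2R}$ and evaluate $\eta F$ at an interior positive maximum. This should give an estimate like $\sup_{B_R}F_+\le C\,\sup_{B_{2R}}v\cdot R^{-2}$ (or a similar bound with the gradient term). To control $v$ and $|\nabla v|$ I need a priori bounds. I would first prove that $u$ is bounded above, via a Gidas--Spruck-type Liouville/blow-up argument, using that $2p-1<\frac{n+2}{n-2}$ is subcritical; this gives $v\ge c>0$. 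Then I would use local gradient estimates. A cleaner alternative is to choose the cutoff scaled to $v$ itself, i.e.\ work with $F/v$. I expect this localization, and in particular making the error terms from the cutoff decay as $R\to\infty$ without any growth information on $v$, to be the main obstacle. If the direct approach fails, I would try raising $F$ to a power, or multiplying by $v^{-\gamma}$, so that the drift term $\beta\nabla v\cdot\nabla F/v$ is absorbed.

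\textbf{Step 3: Equality case.} Once $F\le0$ everywhere, suppose $F(x_1)=0$ at some point $x_1$. The inequality from Step 1 says $F$ is a subsolution of an elliptic operator with bounded, smooth coefficients (note $v>0$). The strong maximum principle then gives $F\equiv0$. Returning to Step 1, all the nonnegative terms must vanish, so $\mathring{\Hess}v\equiv0$ and $\Hess v=\lambda\Id$. The trace of $\Hess v$ is $\Delta v$, which is a function, so $\lambda$ is a priori a function. In $\R^n$ with $n\ge2$, the condition $\Hess v=\lambda\Id$ forces $\lambda$ to be constant: differentiate and commute third derivatives. Hence $v=a+b|x-x_0|^2$ or $v$ is affine. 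Plugging into $F\equiv0$ and the equation fixes $a=\frac{D}{2p}$ and $b=\frac{(p-1)^2}{2D}$, and excludes the affine case. This yields $u=\mathcal U_{x_0}$.
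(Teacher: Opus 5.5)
There is a genuine gap, and it starts in Step~1: the function $F$ does not satisfy an inequality of the kind you want.

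Set $w:=F/v=P-1$, where $P=\frac{D}{2p}u^{p-1}+\frac D2\frac{|\nabla u|^2}{u^{p+1}}$ and $T=\Hess v-\frac{(p-1)^2}{D}P\,\Id$. Your Bochner computation then gives exactly
\[
v\Delta F-\frac{2p}{p-1}\nabla v\cdot\nabla F
=\frac{D}{(p-1)^2}v|T|^2+\frac{2(p-1)}{v}F-\frac{2p(p-1)}{D}F-\frac{n(p-1)^2}{D}\frac{F^2}{v}.
\]
So the zeroth-order part is negative wherever $F>0$ and $v>D/p$, and the quadratic term has the wrong sign. Changing $\beta$ does not help. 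It only adds a multiple of $\nabla v\cdot\nabla F=\frac{D}{(p-1)^2}T(\nabla v,\nabla v)+\frac{F}{v}|\nabla v|^2$, whose first term has no sign and does not vanish where $F=0$.

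The correct object is your fallback with $\gamma=1$, namely $F/v=P-1$. With $\mathcal L:=v\Delta-\frac{2}{p-1}\nabla v\cdot\nabla$ one has the clean identity $\mathcal LP=\frac{D}{(p-1)^2}|T|^2+(p-1)P(P-1)$. This is the Euclidean case of \eqref{eq:riem-gn-P-divergence}.

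The more serious problem is Step~2, which you leave open. A cutoff/Keller--Osserman localization cannot close it. The principal coefficient $v$ of $\mathcal L$ may grow like $|x|^2$, as it does for the profiles. So for $\eta$ supported in $B_{2R}$ the errors $v|\nabla\eta|^2$, $v|\Delta\eta|$ and $|\nabla v||\nabla\eta|$ are $O(1)$, not $o(1)$. The maximum of $\eta^2(P-1)$ then only yields $\sup_{B_R}(P-1)_+\le C_u$. Likewise your bound $R^{-2}\sup_{B_{2R}}v$ is $O(1)$, and $F_+\le Cv$ is trivial once $P$ is bounded.

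The paper instead uses a maximum principle at infinity. It first proves the universal bounds of Lemma~\ref{lem_universal}:
\begin{itemize}
\item $u\le C$, by the blow-up and Gidas--Spruck argument you propose;
\item crucially, $|\nabla u|\le Cu^{(p+1)/2}$.
\end{itemize}
The second bound does not follow from local gradient estimates, which only give $|\nabla u|\le C$ and say nothing where $u\to0$. It needs the barrier lower bound $u\ge C(1+d_K)^{-2/(p-1)}$ together with the local Keller--Osserman bound away from $K=\{u\ge2^{-1/(p-1)}\}$.

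These bounds give $P\le C$, $v\le C_u(1+|x|^2)$ and $|\nabla v|\le C_u(1+|x|)$. Hence $|\mathcal L\psi|\le C_u$ for the logarithmic barrier $\psi=\log(1+|x|^2)$. If $P(y)>1+2\delta$, then $P-\varepsilon\psi$ attains its maximum at some $x_\varepsilon$ with $P(x_\varepsilon)>1+\delta$, and there
\[
(p-1)(1+\delta)\delta\le\mathcal LP(x_\varepsilon)\le\varepsilon\,\mathcal L\psi(x_\varepsilon)\le C_u\varepsilon .
\]
This is a contradiction as $\varepsilon\to0$. The argument uses the positive lower bound $(p-1)P(P-1)\ge(p-1)(1+\delta)\delta$, not superlinear absorption. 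The logarithmic barrier is what makes the error $O(\varepsilon)$ rather than $O(1)$.

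Your Step~3 is essentially fine once the correct identity is in hand. The function $f=1-P\ge0$ satisfies $\mathcal Lf-(p-1)Pf=-\frac{D}{(p-1)^2}|T|^2\le0$. Alternatively, the identity above shows that $F\le0$ satisfies a linear inequality with locally bounded zeroth-order coefficient. Either way the strong maximum principle gives $P\equiv1$ and $T\equiv0$, so $\Hess v=\frac{(p-1)^2}{D}\Id$ directly, with no need for the third-derivative argument.
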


The left-hand side of \eqref{eq:sharp-u} is the $P$-function introduced in the proof of Theorem \ref{thm:riem-gn-critical-rigidity} (see \eqref{eq:critical-GN-P-expressions} below). 
Every profile $\mathcal U_{x_0}$ satisfies $P\equiv1$, so the estimate and
its equality statement are sharp. Its proof starts from the same
weighted divergence identity used in the proof of Theorem \ref{thm:riem-gn-critical-rigidity}, now
with vanishing Ricci tensor, and applies a maximum principle using
some universal estimates. Moreover, the lower-dimensional profiles satisfy $P<1$ everywhere,
showing that the estimate also applies to solutions outside the
classified family.

\subsection*{Organization of the paper} The paper is organized as follows.
Section~\ref{subsec:riem-gn-critical-rigidity} proves
Theorem~\ref{thm:riem-gn-critical-rigidity}.
Section~\ref{sec:estimates} establishes the universal estimates,
and Section~\ref{sec:reflection} contains the reflection argument
and the proof of Theorem~\ref{thm:main}.
Section~\ref{sec:sharp} proves Theorem~\ref{thm:sharp}.
The final section contains the nonflat quotient critical points
in Section~\ref{sec:nonflat-GN-critical-examples}, the proof of
Corollary~\ref{cor:finite-mass} and the sharp Euclidean examples in
Section~\ref{sec:consequences}, and the positively curved examples
for the normalized equation in Section~\ref{sec:riemannian}.

\medskip

The constants denoted by $C$ may change from line to line and depend
only on $n,p$ unless stated otherwise; constants $C_u$ may also
depend on the solution.

\section[Proof of the critical-point rigidity theorem]
{Proof of Theorem~\ref{thm:riem-gn-critical-rigidity}}
\label{subsec:riem-gn-critical-rigidity}

We use $\Delta_g=\operatorname{div}_g\nabla$ and take all
differential operators and tensor norms with respect to $g$.

\begin{proof}[Proof of Theorem \ref{thm:riem-gn-critical-rigidity}]
\noindent\emph{Step 1.} Let $u\in\mathcal{D}^p(M)$ be a positive critical point of the Gagliardo--Nirenberg quotient \eqref{eq:critical-GN-quotient}, i.e. 
\begin{equation}\label{critical_point}
 \left.\frac{d}{dt}\right|_{t=0}\mathscr G_{n,p}[u+t\varphi]=0\, ,
 \quad\text{for every }\varphi\in C_c^\infty(M)\, .     
\end{equation}
For simplicity, we set
$$
 I_1:=\int_M|\nabla u|^2\,d\mu_g,\qquad
 I_2:=\int_Mu^{p+1}\,d\mu_g,\qquad
 I_3:=\int_Mu^{2p}\,d\mu_g\, ;
$$
these quantities are finite and positive since
$u\in\mathcal D^p(M)$. A direct computation using \eqref{critical_point} gives, for every $\varphi\in C_c^\infty(M)$, 
$$
 \frac{\tau}{I_1}\int_M\langle\nabla u,\nabla\varphi\rangle\,d\mu_g
 +\frac{1-\tau}{I_2}\int_Mu^p\varphi\,d\mu_g
 -\frac{1}{I_3}\int_Mu^{2p-1}\varphi\,d\mu_g=0 \, ,
$$
where $\tau$ is given by \eqref{eq:critical-GN-parameters}. Integrating by parts we obtain 
\begin{equation}\label{eq:riem-gn-EL}
 -\Delta_g u=\mu u^{2p-1}-\lambda u^p \quad \text{ in } M\, , 
 \end{equation}
 where 
 \begin{equation}\label{l_and_mu}
      \lambda:=\frac{(1-\tau)I_1}{\tau I_2}>0
 \quad \text{ and }\quad 
 \mu:=\frac{I_1}{\tau I_3}>0\, . 
 \end{equation}
We now consider the following rescaling of $u$ and $g$
\begin{equation}\label{eq:critical-GN-normalization}
 \widetilde u=\left(\frac\mu\lambda\right)^{\frac{1}{p-1}}u \quad \text{and} \quad 
\widetilde g=\frac{\lambda^2}{\mu}g\, , 
\end{equation}
then \eqref{eq:riem-gn-EL} becomes the following
\begin{equation}\label{eq:critical-GN-normalized-equation}
 -\Delta_{\widetilde g}\widetilde u
 =\widetilde u^{2p-1}-\widetilde u^p \quad \text{ in } M\, . 
\end{equation}
We observe that this rescaling of the metric preserves completeness, nonnegative Ricci curvature, and finiteness of the integrals. The Gagliardo-Nirenberg quotient is invariant under
this rescaling since by \eqref{eq:critical-GN-parameters} one has
$$
 \frac{\tau(n-2)}4+\frac{n(1-\tau)}{2(p+1)}-\frac n{4p}=0\, . 
$$
From now on, we drop the tildes and redefine $I_1,I_2,I_3$ using the rescaled solution and metric. Stationarity is preserved, and the rescaled function is nonconstant
because $I_1>0$. Comparing its Euler--Lagrange equation \eqref{eq:riem-gn-EL} with
\eqref{eq:critical-GN-normalized-equation} we have
\begin{equation}\label{eq:riem-gn-ABC}
 \frac{I_1}{I_2}=\frac{\tau}{1-\tau}
          =\frac{n(p-1)}{D(p+1)},\qquad
 \frac{I_1}{I_3}=\tau,\qquad I_3=I_1+I_2\, ,
\end{equation}
where $D$ is given by \eqref{eq:constants}.

\medskip
\noindent\emph{Step 2.} We fix some $o\in M$ and write $r(x)=d_g(o,x)$; from the Laplacian comparison (see e.g. \cite[Chapter~7]{Petersen}) we have 
$$
\Delta_g r\le \frac{n-1}{r} \quad \text{ in } M\, ,
$$ 
in the weak sense. Choose $c>0$ with
\[
 c\le\min_{\partial B_1(o)}u,\qquad
 c^{p-1}\le\frac{2D}{(p-1)^2},
\]
and set 
$$
\phi=c r^{-\frac{2}{p-1}} \quad \text{ in } M\setminus B_1(o)\, . 
$$
A direct computation, using the Laplacian comparison, shows that 
$$
\Delta_g\phi\ge\frac{2D}{(p-1)^2}c\,r^{-\frac{2p}{p-1}}
 \ge\phi^p \quad \text{ in } M\setminus B_1(o)\, , 
$$
in the weak sense. On the other hand, 
$$
\Delta_g u\le u^p \quad \text{in }  M 
$$
moreover, from the definition of $c$ and since $u>0$ we have 
$$
\phi-u\le 0\quad \text{ on } \partial B_1(o)\quad \text{ and } \quad \limsup_{r\to\infty}(\phi-u)\le0\, . 
$$ 
The weak maximum principle in $M\setminus B_1(o)$ gives 
$$
u\geq \phi \quad \text{ in } M\setminus B_1(o)\, , 
$$
i.e. 
$$
 u\ge c\,r^{-\frac{2}{p-1}} \quad \text{ in } M\setminus B_1(o) 
 $$
 i.e. 
 \begin{equation}\label{eq:riem-gn-v-growth}
 v:=u^{1-p}\le C(1+r^2)\quad\text{in }M\, ,
\end{equation}
for some constant $C>0$ depending also on $\min_{\partial B_1(o)}u$.

\medskip
\noindent\emph{Step 3.} Differentiating $v=u^{1-p}$ and using \eqref{eq:critical-GN-normalized-equation} we have 
\begin{equation}\label{eq:critical-GN-pressure-equation}
 v\Delta_g v=(p-1)(1-v)+\frac p{p-1}|\nabla v|^2.
\end{equation}
We define the $P$-function 
\begin{equation}\label{eq:critical-GN-P-definition}
P=\frac D{2p}\left(1+\frac{\Delta_g v}{p-1}\right)\, , 
\end{equation}
and the symmetric $2$-tensor
\begin{equation}\label{eq:critical-GN-tensor-definition}
 T_g:=\nabla^2v-\frac{(p-1)^2}{D}P g
     =\nabla^2v-\frac{p-1}{2p}(\Delta_gv+p-1)g\, . 
\end{equation}
By using \eqref{eq:critical-GN-pressure-equation} we have 
\begin{equation}\label{eq:critical-GN-P-expressions}
 P=\frac D{2pv}+\frac D{2(p-1)^2v}|\nabla v|^2
  =\frac D{2p}u^{p-1}
    +\frac D2\frac{|\nabla u|^2}{u^{p+1}}>0\, . 
\end{equation}
Since $u\in\mathcal{D}^p(M)$ we have $u^{p+1}P\in L^1(M)$, and, by \eqref{eq:riem-gn-ABC},
\begin{equation}\label{eq:riem-gn-P-mean}
 \int_Mu^{p+1}P\,d\mu_g
 =\frac D{2p}I_3+\frac D2 I_1
 =\frac D{2p}I_2+\frac{D(p+1)}{2p}I_1=I_2\, , 
\end{equation}
i.e. 
\begin{equation}\label{eq:critical-GN-zero-mean}
 u^{p+1}(P-1)\in L^1(M) \quad \text{ and} \quad 
 \int_Mu^{p+1}(P-1)\,d\mu_g=0.
\end{equation}
Taking the gradient of \eqref{eq:critical-GN-pressure-equation} gives
$$
 v\nabla\Delta_gv
 =\frac{2p}{p-1}\nabla^2v(\nabla v)
  -(\Delta_gv+p-1)\nabla v
 =\frac{2p}{p-1}T_g(\nabla v)\, , 
$$
where we used \eqref{eq:critical-GN-tensor-definition}. Thus,
\begin{equation}\label{eq:critical-GN-gradient-P}
 \nabla P=\frac D{(p-1)^2}v^{-1}T_g(\nabla v)\quad \text{and } \quad
 \operatorname{tr}_g T_g=(p-1)(P-1).
\end{equation}
The standard commutation formula
\cite[Chapter~9]{Petersen}, with
$(\operatorname{div}_g T_g)_j=\nabla^i(T_g)_{ij}$, gives
$$
 \operatorname{div}_g\nabla^2v
 =\nabla(\Delta_gv)+\operatorname{Ric}_g(\nabla v)\, . 
$$
It yields
$$
 \operatorname{div}_g T_g
 =\frac{p+1}{(p-1)}v^{-1}T_g(\nabla v)
   +\operatorname{Ric}_g(\nabla v)\,  
$$
and therefore,
\begin{equation}\label{eq:critical-GN-weighted-tensor}
 \operatorname{div}_g(u^{p+1}T_g)
 =u^{p+1}\operatorname{Ric}_g(\nabla v)\, . 
\end{equation}
Since 
$$ 
\langle T_g,\nabla^2v\rangle
 =|T_g|^2+\frac{(p-1)^2}{D}P\operatorname{tr}_g T_g
 =|T_g|^2+\frac{(p-1)^3}{D}P(P-1)
$$
using \eqref{eq:critical-GN-gradient-P} and \eqref{eq:critical-GN-weighted-tensor}, we have the following
\begin{equation}\label{eq:riem-gn-P-divergence}
 \operatorname{div}_g(u^2\nabla P)
 =u^{p+1}\left[
  \frac D{(p-1)^2}
   \bigl(|T_g|^2+\operatorname{Ric}_g(\nabla v,\nabla v)\bigr)
  +(p-1)P(P-1)\right]\, .
\end{equation}

\medskip
\noindent\emph{Step 4.} We fix $R\geq 1$ and we choose standard Lipschitz cutoffs $0\le\eta_R\le1$ such that 
$$
\eta_R\equiv1 \quad\text{in } B_R(o) \, , \qquad \eta_R\equiv0 \quad \text{in } M\setminus B_{2R}(o) 
$$
and 
$$
|\nabla\eta_R|\le \frac{C}{R}\, . 
$$
Testing \eqref{eq:riem-gn-P-divergence} with $\eta_R^2$ gives
\begin{align*}
 &\int_M\eta_R^2u^{p+1}\left[
  \frac D{(p-1)^2}\bigl(|T_g|^2+
     \operatorname{Ric}_g(\nabla v,\nabla v)\bigr)
  +(p-1)P(P-1)\right]d\mu_g\\
 &\qquad=-\frac{2D}{(p-1)^2}\int_M
    \eta_Ru^{p+1}\langle T_g(\nabla v),\nabla\eta_R\rangle\,d\mu_g.
\end{align*}
By Young's inequality and using $P(P-1)=(P-1)^2+(P-1)$, we obtain
\begin{align}
 &\frac D{2(p-1)^2}\int_M\eta_R^2u^{p+1}|T_g|^2\,d\mu_g
  +\frac D{(p-1)^2}\int_M\eta_R^2u^{p+1}
       \operatorname{Ric}_g(\nabla v,\nabla v)\,d\mu_g\\
 &\qquad+(p-1)\int_M\eta_R^2u^{p+1}(P-1)^2\,d\mu_g\\\notag
 &\le(p-1)\left|\int_M\eta_R^2u^{p+1}(P-1)\,d\mu_g\right|
   \\
   &\qquad+\frac{C}{R^2}
       \int_{B_{2R}(o)\setminus B_R(o)}u^{p+1}|\nabla v|^2\,d\mu_g\, ,
       \label{eq:critical-GN-cutoff-estimate}
\end{align}
for some $C>0$. The first term on the right-hand side tends to zero by dominated convergence and
\eqref{eq:critical-GN-zero-mean}.
For the second term, we have 
\begin{align*}
     \frac1{R^2}\int_{B_{2R}(o)\setminus B_R(o)}
       u^{p+1}|\nabla v|^2\,d\mu_g&=\frac{(p-1)^2}{R^2}\int_{B_{2R}(o)\setminus B_R(o)} v|\nabla u|^2\, d\mu_g \\
 &\le C\int_{M\setminus B_R(o)}|\nabla u|^2\,d\mu_g
 \longrightarrow0\, , 
\end{align*}
where we used \eqref{eq:riem-gn-v-growth}. Since every term on the left-hand side of \eqref{eq:critical-GN-cutoff-estimate} is nonnegative, taking the limit we have 
$$
T_g\equiv0\, , \quad P\equiv1 \, , \quad \mathrm{Ric}(\nabla v,\nabla v)\equiv 0 \, .
$$
In particular, from \eqref{eq:critical-GN-tensor-definition} we have
\begin{equation}\label{eq:critical-GN-hessian}
 \nabla^2v=\frac{(p-1)^2}{D}g\, . 
\end{equation}
First we recall that all the computations above were done with respect to $\widetilde u$ and $\widetilde g$. By using Tashiro's theorem \cite{Tashiro} we have that $(M,\widetilde g)$ is isometric to $\mathbb{R}^n$ with its standard metric. Moreover, from \eqref{eq:critical-GN-hessian}, we obtain
\begin{equation}\label{eq:critical-GN-radial-v}
 v(x)=v(x_0)+\frac{(p-1)^2}{2D}|x-x_0|_{\widetilde g}^2
\end{equation}
for some point $x_0$. From $P\equiv 1$ and \eqref{eq:critical-GN-P-expressions} we deduce, 
\[
v(x_0)=\frac D{2p}.
\]
Since $v={\widetilde u}^{1-p}$, the rescaled function is
\[
 \widetilde u(x)
 =\left[\frac D{2p}+\frac{(p-1)^2}{2D}|x-x_0|_{\widetilde g}^2\right]^{-\frac{1}{p-1}}.
\]
Using \eqref{eq:critical-GN-normalization} we obtain
\[
 u(x)=\left[
       \frac{\mu D}{2p\lambda}
       +\frac{\lambda(p-1)^2}{2D}|x-x_0|^2
       \right]^{-\frac{1}{p-1}}.
\]
This proves \eqref{eq:critical-GN-classification}.
\end{proof}

\section{Universal estimates in $\R^n$}\label{sec:estimates}

We first establish bounds that hold for every positive entire
solution of \eqref{eq:main}. Local elliptic regularity and positivity imply that such a solution is smooth.
In what follows, we will use the doubling lemma in \cite{PolacikQuittnerSouplet}, classical interior and boundary estimates in \cite[Chapters~6, 8, and~9]{GilbargTrudinger} and a result by Keller--Osserman in  \cite{Keller,Osserman}. In particular, we will use the following \emph{local Keller--Osserman estimate:} if 
$$
\Delta w\geq \frac{1}{2}w^p\, , \quad w>0 \quad \text{in } B_R(x) \, ,\quad \text{ with } p>1  
$$
then 
$$
 w(x)\leq c R^{-\frac{2}{p-1}}\, .
$$
The previous local estimate implies the famous \emph{Keller--Osserman Liouville theorem:} there is no positive solution of $\Delta w\geq \frac{1}{2}w^p$ in $\mathbb{R}^n$ with $p>1$. 

Let $n\geq 2$ and $1<p<\frac{n}{n-2}$, and let $u$ be a positive solution (smooth) of 
\begin{equation}\label{eq:main_bis}
    -\Delta u=u^{2p-1}-u^p \quad \text{ in } \R^n\, .
\end{equation}
We consider the superlevel set of $u$ 
$$
K:=\left\{x\in\R^n\, : \, u(x)\ge2^{-\frac{1}{p-1}}\right\}
$$
and we denote by 
$$
d_K(x):=\dist(x,K)\, .
$$

\begin{lemma}\label{lem_universal} We have, 
\begin{itemize}
    \item[$i)$] $u$ is bounded, i.e. 
    \begin{equation}\label{u_bdd}
 u\le C
 \quad\text{in }\R^n\, ;
 \end{equation}
    \item[$ii)$] $K\neq\emptyset$ and
\begin{equation}\label{eq:distance-lower}
 u(x)\ge C\left(1+d_K(x)\right)^{-\frac{2}{p-1}}\, ; 
\end{equation}
\item[$iii)$] the following gradient estimate
\begin{equation}\label{gradu_est}
    |\nabla u|\le Cu^{\frac{p+1}{2}}
 \quad\text{in }\R^n\, ,
\end{equation}
holds.
\end{itemize}
Here the constants $C>0$ depend only on $n$ and $p$.
\end{lemma}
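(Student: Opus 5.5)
For part $i)$ I would argue by blow-up, using the doubling lemma of \cite{PolacikQuittnerSouplet}. Suppose there are solutions $u_k$ and points $y_k$ with $u_k(y_k)\to\infty$. Set $M_k:=u_k^{p-1}$; the exponent is chosen so that $M_k^{-1/2}$ is the natural length scale for the leading term $u^{2p-1}$. The doubling lemma then gives points $x_k$ with two properties. First, $u_k(x_k)^{p-1}=:\lambda_k^2\to\infty$. Second, $u_k\le 2^{1/(p-1)}u_k(x_k)$ on the ball $B_{k/\lambda_k}(x_k)$. Now rescale by
\[
w_k(y):=u_k(x_k)^{-1}u_k\bigl(x_k+y/\lambda_k\bigr).
\]
This gives $-\Delta w_k=w_k^{2p-1}-\lambda_k^{-2}w_k^{p}$ on $B_k(0)$, with $w_k(0)=1$ and $w_k\le C$. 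Interior estimates from \cite{GilbargTrudinger} give $C^{2,\alpha}_{loc}$ compactness. We pass to a limit $w\ge0$ with $w(0)=1$ solving $-\Delta w=w^{2p-1}$ in $\R^n$, and $w>0$ by the strong maximum principle. The exponent satisfies $2p-1<\frac{n+2}{n-2}$, since $p<\frac{n}{n-2}$ gives $2p-1<\frac{n+2}{n-2}$. So $w$ is a positive solution of a subcritical Lane--Emden equation on $\R^n$, and the Gidas--Spruck Liouville theorem says no such solution exists. This contradiction gives a bound $C(n,p)$ uniform over all solutions. For $n=2$ any exponent is allowed, and the same Liouville theorem applies.

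For part $ii)$, first I show $K\neq\emptyset$. Suppose instead $u<2^{-1/(p-1)}$ everywhere. Then $u^{2p-1}\le \frac12 u^p$, so $\Delta u=u^p-u^{2p-1}\ge\frac12u^p$ on all of $\R^n$. This contradicts the Keller--Osserman Liouville theorem. For the lower bound, fix $x$ with $d:=d_K(x)\ge1$; points with $d<1$ are handled by the Harnack argument below. On the open set $\R^n\setminus K$ we have $\Delta u\ge \frac12 u^p$. Hence on $B_{d}(x)\subset\R^n\setminus K$, the local Keller--Osserman estimate gives an upper bound, but I need a lower bound. So I would instead use a comparison as in Step 2 of the proof of Theorem~\ref{thm:riem-gn-critical-rigidity}. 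Everywhere we have $\Delta u\le u^p$. Pick $z\in K$ with $|x-z|=d$. Harnack's inequality applies to $u$, since $-\Delta u=c(x)u$ with $|c|=|u^{2p-2}-u^{p-1}|\le C$ by $i)$. It gives $u\ge c_0$ on $B_1(z)$. Then compare $u$ on the exterior region $\R^n\setminus B_1(z)$ with the barrier
\[
\phi=c\,|y-z|^{-2/(p-1)}.
\]
Since $p<\frac n{n-2}$, this barrier satisfies $\Delta\phi\ge\phi^p$ when $c$ is small; this is the same computation as in Step 2, now with $D>0$. We have $\phi\le u$ on $\partial B_1(z)$ and $\phi\to0$ at infinity. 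So the maximum principle, applied to $\phi-u$ (where $\Delta\phi-\Delta u\ge \phi^p-u^p$), yields $u\ge\phi$. This gives $u(x)\ge c(1+d)^{-2/(p-1)}$. This choice keeps the constants depending only on $n,p$.

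For part $iii)$ I would again use a rescaled argument, now at the scale given by $u$ itself. Fix $x$ and set $\rho:=u(x)^{-(p-1)/2}$. By a Harnack-type argument I want $u$ comparable to $u(x)$ on $B_{\rho}(x)$, up to constants depending only on $n,p$. Then $\tilde u(y):=u(x)^{-1}u(x+\rho y)$ satisfies
\[
-\Delta\tilde u=u(x)^{p-1}\tilde u^{2p-1}-\tilde u^p,
\]
with bounded coefficients because $u(x)\le C$. Interior gradient estimates then give $|\nabla\tilde u(0)|\le C$, which is exactly $|\nabla u(x)|\le Cu(x)^{1+(p-1)/2}=Cu(x)^{(p+1)/2}$.

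The main obstacle is the comparability of $u$ on $B_\rho(x)$ when $u(x)$ is small, because $\rho$ is then large. Harnack on balls of radius $\rho$ is not directly available from a bounded potential. The potential $c=u^{2p-2}-u^{p-1}$ is, however, of size $u^{p-1}$, which is small. I would try the local Keller--Osserman estimate from above and the barrier from below, both at scale $\rho$. The upper bound $u\le Cu(x)$ on $B_{\rho/2}(x)$ follows from the Keller--Osserman local bound applied in the region where $u$ is small. The lower bound comes from the scaled Harnack inequality, since $\rho^2|c|\lesssim \rho^2\sup_{B_\rho}u^{p-1}$ is bounded once the upper bound is known. Alternatively, if this becomes delicate, I would apply the doubling lemma once more to $|\nabla u|^{2/(p+1)}+u$ and argue by contradiction with a blow-up limit.
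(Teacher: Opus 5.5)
Your parts $i)$ and $ii)$ follow the paper's proof. Part $i)$ is blow-up through the doubling lemma and the Gidas--Spruck theorem. Part $ii)$ uses Keller--Osserman for $K\neq\emptyset$, Harnack at a nearest point $z\in K$, and comparison with the exterior barrier $c|y-z|^{-2/(p-1)}$, $c^{p-1}\le 2D/(p-1)^2$.

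There is a scaling slip in $i)$, however. With $M_k=u_k^{p-1}=\lambda_k^2$, the doubling lemma controls $u_k$ on $B_{k/\lambda_k^2}(x_k)$, not on $B_{k/\lambda_k}(x_k)$. Likewise, the natural length for $u^{2p-1}$ at height $A$ is $A^{1-p}=M_k^{-1}$, not $M_k^{-1/2}$. With your change of variables $x_k+y/\lambda_k$ you actually get $-\Delta w_k=\lambda_k^{2}w_k^{2p-1}-w_k^{p}$, whose leading coefficient blows up. Using $x_k+y/\lambda_k^{2}$ gives the equation you wrote, and the argument then goes through as in the paper.

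The genuine gap is in $iii)$, at exactly the step you flag. Set $\rho=u(x)^{-(p-1)/2}$. The bound $u\le Cu(x)$ on $B_{\rho/2}(x)$ does not follow from the local Keller--Osserman estimate ``applied in the region where $u$ is small''. That estimate needs $\Delta u\ge\frac12u^p$ on a whole ball of radius comparable to $\rho$, i.e.\ such a ball must miss $K$. Your argument does not exclude a point with tiny $u(x)$ and $d_K(x)\ll\rho$, where $B_{\rho/2}(x)$ would contain points with $u\ge2^{-1/(p-1)}\gg u(x)$. What excludes it is part $ii)$, which is equivalent to $\rho\le C(1+d_K(x))$.

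The fix is to split cases.
\begin{itemize}
\item[(a)] If $d_K(x)\le1$, then $u(x)\ge c$, and the bounded gradient from $i)$ plus interior estimates suffices.
\item[(b)] If $d_K(x)>1$, then $B_{\rho/(2C)}(x)\subset B_{d_K(x)}(x)\subset\R^n\setminus K$. Keller--Osserman on balls of radius $\rho/(4C)$ then gives $u\le C'u(x)$ on $B_{\rho/(4C)}(x)$. Your rescaling at scale $\rho/(4C)$, together with the interior gradient estimate, finishes the proof.
\end{itemize}
No Harnack lower bound is needed, since only $\sup\tilde u$ enters the gradient estimate.

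The paper runs the same argument at scale $d=d_K(x)$, where avoiding $K$ is automatic. It uses $ii)$ only at the end, to convert $d^{-(p+1)/(p-1)}$ into $u(x)^{(p+1)/2}$.

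Your fallback via the doubling lemma on $|\nabla u|^{2/(p+1)}+u$ does not address this. That quantity is already bounded by $i)$, and the difficulty is the degeneration $u\to0$, not a blow-up.
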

\begin{proof} \emph{$i)$} Assume that estimate \eqref{u_bdd} fails. Then there are solutions $u_j$
and points $y_j\in\R^n$ such that $u_j(y_j)\to\infty$.
Apply the doubling lemma
\cite[Lemma~5.1 and Remark 5.2 (a)]{PolacikQuittnerSouplet} to $u_j^{p-1}$ on $\R^n$,
with doubling parameter $j$. It gives points $x_j\in\R^n$ such that
$$
A_j:=u_j(x_j)\ge u_j(y_j)
$$
and 
$$
 u_j(x)\le 2^{\frac{1}{p-1}}u_j(x_j)
 \quad\text{for all } |x-x_j|\le jA_j^{1-p}\, .
$$
Now, we rescale $u_j$ by setting
\[
 U_j(y):=A_j^{-1}u_j\left(x_j+A_j^{1-p}y\right),\qquad |y|\leq j\, .
\]
The functions $U_j$ solve
$$
 -\Delta U_j(y)=U_j^{2p-1}(y)-A_j^{1-p}U_j^p(y) \, ,\quad  |y|\leq j\, . 
$$
Moreover, 
$$
U_j(0)=A_j^{-1}u_j\left(x_j\right)=1
$$
and 
$$
0<U_j(y)\le2^{\frac{1}{p-1}} \, , \quad  |y|\leq j\, . 
$$
By using elliptic estimates and standard embeddings, we deduce that some sub-sequence of $U_j$ converges in $C^2_{\mathrm{loc}}$ to a (classical) solution $U\geq 0$ of 
$$
-\Delta U=U^{2p-1} \quad \text{ in } \R^n\, , 
$$
moreover, $U(0)=1$. This contradicts, for $n\geq 3$, the classical Gidas-Spruck Liouville theorem~\cite{GidasSpruck}, since $p<\frac{n}{n-2}$ gives 
$$
2p-1< \frac{n+2}{n-2}\, .
$$
If $n=2$, on the other hand, this contradicts the standard Liouville theorem in $\R^2$. Thus \eqref{u_bdd} holds. Interior estimates on unit balls, applied to \eqref{eq:main_bis} then yield 
\begin{equation}\label{gradu_bdd}
|\nabla u|\le C(n,p)
\quad\text{in }\mathbb R^n.
\end{equation}
\emph{$ii)$} We first prove that $K\neq\emptyset$. Suppose, by contradiction, that
$$
u(x)<2^{-\frac{1}{p-1}}
\qquad\text{for every }x\in\R^n.
$$
Then
$$
1-u^{p-1}\ge \frac12
$$
and hence
$$
\Delta u
=u^p\bigl(1-u^{p-1}\bigr)
\ge \frac12 u^p
\qquad\text{in }\R^n.
$$
By the Keller--Osserman Liouville theorem recalled before we have $u\equiv0$, which contradicts $u>0$. Thus $K\neq\emptyset$.

The equation can be written as
$\Delta u=u^{p-1}(1-u^{p-1})u$, with uniformly bounded coefficient.
Harnack's inequality on $B_2(y)$
\cite[Chapter~8]{GilbargTrudinger} therefore gives
\begin{equation}\label{harn}
\inf_{B_1(y)}u\geq c \sup_{B_1(y)}u\geq 2^{-\frac{1}{p-1}}c=:c_1(n,p)\quad\forall y\in K.
\end{equation}
Choose a positive constant $c_2=c_2(n,p)\leq c_1$ so that
\[
 c_2^{p-1}\leq\frac2{p-1}\left(\frac2{p-1}+2-n\right).
\]
The right-hand side is positive by \eqref{eq:range}.  For
$|x-y|\geq1$, the function
\[
 \phi_y(x)=c_2|x-y|^{-\frac2{p-1}}
\]
satisfies
\[
 \Delta\phi_y
 =c_2\frac2{p-1}\left(\frac2{p-1}+2-n\right)
 |x-y|^{-\frac{2p}{p-1}}\geq\phi_y^p.
\]
Since 
$$
\Delta u\leq u^p,\quad \phi_y\leq u\quad\text{on } \partial B_1(y),\quad
\limsup_{|x|\to\infty}(\phi_y-u)\leq0,
$$
comparison gives
$$
\phi_y\leq u\quad\text{on }B_1(y)^c.
$$
Fix $x\in\R^n$.  Since $K$ is nonempty and closed, there exists
$y\in K$ such that $|x-y|=d_K(x)$. If $d_K(x)\leq1$, then $x\in\overline{B_1(y)}$, and \eqref{harn} gives
\[
 u(x)\geq c_1\geq c_2\geq c_2(1+d_K(x))^{-\frac2{p-1}}.
\]
If $d_K(x)>1$, we instead apply the exterior comparison with the barrier
centered at this same point $y$:
\[
 u(x)\geq\phi_y(x)=c_2\,d_K(x)^{-\frac2{p-1}}
 \geq c_2(1+d_K(x))^{-\frac2{p-1}}.
\]
Thus, for every $x\in\R^n$, \eqref{eq:distance-lower} holds.

\medskip

\emph{$iii)$} If $d=d_K(x)\le1$, the estimate follows from
\eqref{gradu_bdd} and \eqref{eq:distance-lower}.
If $d>1$, since $B_d(x)\cap K=\emptyset$, then, arguing as in $ii)$, we have
$$
\Delta u\geq \frac12 u^p\quad\text{in }B_d(x).
$$
Applying the local Keller--Osserman estimate on balls of radius $d/4$ centered in
$B_{d/2}(x)$ gives
\begin{equation}\label{supu}
 \sup_{B_{d/2}(x)}u\le Cd^{-2/(p-1)}.
\end{equation}
Thus, the rescaled function $W(\xi)=d^{2/(p-1)}u(x+d\xi)$ on $B_{1/2}$
satisfies
\begin{equation}\label{eq:distance-rescaling}
 0<W\le C,\qquad \Delta W=W^p-d^{-2}W^{2p-1}\quad\text{on }B_{1/2}
\end{equation}
The right-hand side is uniformly bounded. The interior gradient
estimate gives $|\nabla W(0)|\le C$, whence
\[
 |\nabla u(x)|\le Cd^{-(p+1)/(p-1)}
 \le C u(x)^{(p+1)/2},
\]
by \eqref{eq:distance-lower}. This proves \eqref{gradu_est}.
\end{proof}

\begin{corollary}\label{cor:pressure-bounds}
For $v=u^{1-p}$, we have
\begin{equation}\label{eq:pressure-bounds}
 v^{-1}\le C,\qquad |\nabla v|^2\le Cv.
\end{equation}
In particular,
\begin{equation}\label{eq:pressure-growth}
 v(x)\le C_u(1+|x|^2),\qquad
 |\nabla v(x)|\le C_u(1+|x|).
\end{equation}
\end{corollary}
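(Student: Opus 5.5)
The plan is to translate Lemma~\ref{lem_universal} into statements about $v=u^{1-p}$; each bound is a direct algebraic consequence of an item of that lemma. For the first inequality in \eqref{eq:pressure-bounds}, note that $v^{-1}=u^{p-1}$. The bound $v^{-1}\le C$ is therefore exactly \eqref{u_bdd} raised to the power $p-1$.

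For the gradient bound, differentiate to get $\nabla v=(1-p)u^{-p}\nabla u$. Hence
\[
|\nabla v|^2=(p-1)^2u^{-2p}|\nabla u|^2\le (p-1)^2C^2u^{-2p}u^{p+1}=Cu^{1-p}=Cv,
\]
where we used the gradient estimate \eqref{gradu_est}. This gives the second inequality in \eqref{eq:pressure-bounds}, with $C$ depending only on $n,p$.

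For \eqref{eq:pressure-growth}, first bound $v$ itself using the lower bound \eqref{eq:distance-lower}. Raising it to the power $1-p<0$ gives
\[
v(x)\le C(1+d_K(x))^{2}.
\]
Since $K\neq\emptyset$ by Lemma~\ref{lem_universal}~$ii)$, we may fix some $y_0\in K$; then $d_K(x)\le|x-y_0|\le|x|+|y_0|$. This yields $v(x)\le C_u(1+|x|^2)$, with $C_u$ depending on $|y_0|$, hence on the solution. Finally,
\[
|\nabla v|\le C\sqrt v\le C_u(1+|x|).
\]

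No step is a real obstacle: the only point requiring attention is that the growth constants depend on $u$ through the location of a point of $K$. Everything else follows directly from Lemma~\ref{lem_universal}.
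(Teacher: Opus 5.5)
Your proof is correct. The bounds \eqref{eq:pressure-bounds} are obtained exactly as in the paper, directly from \eqref{u_bdd} and \eqref{gradu_est}.

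For \eqref{eq:pressure-growth} you take a different route.
\begin{itemize}
\item The paper notes that $|\nabla\sqrt v|=|\nabla v|/(2\sqrt v)\le C$. It integrates along the segment from $0$ to $x$ to get $\sqrt{v(x)}\le\sqrt{v(0)}+C|x|$, so the dependence on the solution enters through $u(0)$.
\item You instead invert the barrier lower bound \eqref{eq:distance-lower} to get $v\le C(1+d_K)^2$, and then use $d_K(x)\le|x|+|y_0|$. Here the dependence on the solution enters through the position of a point of $K$.
\end{itemize}

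The paper's argument shows the growth follows from the pressure gradient bound alone: any positive function with $|\nabla v|^2\le Cv$ grows at most quadratically. Your argument gives the slightly more intrinsic statement $v\le C(1+d_K)^2$, with a constant depending only on $n,p$. Since item $iii)$ of Lemma~\ref{lem_universal} is itself proved using item $ii)$, neither route is more economical in its ultimate dependencies.
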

\begin{proof}
Inequalities \eqref{eq:pressure-bounds} follow from Lemma \ref{lem_universal}. Moreover, since $|\nabla\sqrt v|\le C$, integration along a segment from the origin proves \eqref{eq:pressure-growth}.
\end{proof}

\section{Proof of Theorem~\ref{thm:main}}
\label{sec:reflection}

We combine the classical reflection and maximum-principle arguments of
Gidas--Ni--Nirenberg \cite{GNN}
with the Pohozaev--Pucci--Serrin variational identity
\cite{PucciSerrinVariational}. The annular hypothesis supplies an
ordering between the solution and its reflection with respect to every hyperplane. The usual consequences of the strong maximum principle and Hopf's
lemma then give radial symmetry and monotonicity. Decay follows from
the same annular hypothesis, and radial uniqueness identifies the profile.

\begin{proof}[Proof of Theorem~\ref{thm:main}]
Write 
\begin{equation}\label{def_f}
f(s)=s^{2p-1}-s^p\, ,     
\end{equation} 
so that $u$ solves 
$$
-\Delta u=f(u) \quad \text{ in } \mathbb{R}^n\, . 
$$
By Lemma~\ref{lem_universal}, there exists a constant $C_0>0$ such that $0<u\leq C_0$.
Fix a sequence $R_j\to\infty$ such that $$M(R_j)/R_j\to0.$$ Fix $e\in\mathbb{S}^{n-1}$ and $t\in\R$, and consider the half-space such that $e$ is its inward unit normal, i.e. 
$$
 H=\{x\in\mathbb{R}^n\, :\, x\cdot e>t\} 
$$
and its boundary 
$$
\Pi=\{x\in\mathbb{R}^n\, :\, x\cdot e=t\}\, .
$$
We set $u^\sigma=u\circ\sigma$, where $\sigma:\mathbb{R}^n\to\mathbb{R}^n$ is the reflection with respect to $\Pi$, explicitly $\sigma(x)=x-2(x\cdot e-t)e$.

\medskip
\noindent\emph{Step 1.} We construct a function $z\in C^2_{\mathrm{loc}}(\overline H)$ such that
 \begin{equation}\label{eq:lower-solution}
 \begin{cases}
     -\Delta z=f(z) \, & \text{ in } H \\
     0<z\le\min\{u,u^\sigma\}\, & \text{ in }H\\
     z=u\, &  \text{ on }\Pi\, . 
 \end{cases}
\end{equation}
Set $h:=\min\{u,u^\sigma\}$. We have that $h\in C(\overline H)$ is a distributional
supersolution, while $0$ is a subsolution because $f(0)=0$.

For $k\ge1$, let $\Omega_k=B_k(te)\cap H$ and choose
$\chi_k\in C_c^\infty(B_{3k/4}(te))$ such that
$0\le\chi_k\le1$ and $\chi_k=1$ on $B_{k/2}(te)$.
Prescribe the continuous boundary datum
\[
 g_k=(\chi_k u)|_{\partial\Omega_k}.
\]
On the flat part of $\partial\Omega_k$ we have
$u=u^\sigma=h$, whereas on the spherical part $g_k=0$.
Thus $0\le g_k\le h$ on the whole boundary, including the
edge where its two parts meet. Moreover, every boundary point of
$\Omega_k$ satisfies an exterior cone condition and is therefore
regular for the Laplace Dirichlet problem in the sense of Perron.

We can now apply the sub- and supersolution theorem of
Cl\'ement and Sweers \cite[Theorem, p.~191]{ClementSweers1987}.
To check its hypotheses on the nonlinearity, extend $f$ to
$\mathbb R$ by
\[
 \widetilde f(s)=(s_+)^{2p-1}-(s_+)^p,
 \qquad s_+=\max\{s,0\}.
\]
The function $(x,s)\mapsto\widetilde f(s)$ is continuous on
$\overline{\Omega}_k\times\mathbb R$, which verifies
assumption~(H1) of that theorem. The continuity of $g_k$ gives
(H2), and the Perron regularity of every boundary point gives
(H3). Therefore there exists $z_k\in C(\overline{\Omega}_k)$
such that
\[
\begin{cases}
 -\Delta z_k=f(z_k)\quad&\text{in }\Omega_k,\\
 z_k=g_k\quad&\text{on }\partial\Omega_k,\\
 0\le z_k\le h\le C_0.
 \end{cases}
\]
The equation holds in the sense of distributions; interior elliptic
regularity and the local Lipschitz continuity of $f$ give
$z_k\in C^2(\Omega_k)$.

We next pass to the limit while retaining the boundary trace.
Fix $R>0$. For $k>4R$, the function $w_k=z_k-u$ satisfies
\[
\begin{cases}
 -\Delta w_k=f(z_k)-f(u)
       &\quad\text{in }H\cap B_{2R}(te),\\
 w_k=0&\quad\text{on }\Pi\cap B_{2R}(te).
 \end{cases}
\]
Both $w_k$ and $\Delta w_k$ are bounded independently of
$k$. Local interior and (flat-)boundary $W^{2,q}$ estimates consequently give, for every $1<q<\infty$,
\[
 \|w_k\|_{W^{2,q}(H\cap B_R(te))}\le C(R,q),
\]
where $C(R,q)$ is independent of $k$; see
\cite[Chapter~9, Theorems~9.11 and~9.13]
{GilbargTrudinger}. Taking $q>n$ and using a diagonal subsequence, we obtain
$z_k\to z$ in $C^1$ on compact subsets of $\overline H$.
Thus 
\[
\begin{cases}
 -\Delta z=f(z)\quad&\text{in }H,\\
 z=u\quad&\text{on }\Pi,\\
 0\le z\le h
 \end{cases}
\]
in the sense of distributions in $H$. Interior regularity and local boundary Schauder regularity yield
$z\in C^{2,\alpha}_{\mathrm{loc}}(\overline H)$ for some
$\alpha\in(0,1)$ (see also
\cite[Lemma~6.18]{GilbargTrudinger}). Finally,
\[
 -\Delta z+C_0^{p-1}z
 =z^{2p-1}+\bigl(C_0^{p-1}-z^{p-1}\bigr)z\ge0.
\]
By the strong maximum principle, since  $z=u>0$ on $\Pi$, we have $z>0$ in $H$. This proves
\eqref{eq:lower-solution}. 

\medskip
\noindent\emph{Step 2.} Choose $\eta_R\in C_c^\infty(B_{7R/4})$, with $0\le\eta_R\le1$,
equal to one on $B_{5R/4}$, and $|\nabla\eta_R|\le C/R$.
Choose also $\chi_R\in C_c^\infty(A_R)$, with $0\le\chi_R\le1$,
equal to one on $\operatorname{supp}\nabla\eta_R$, and
$|\nabla\chi_R|\le C/R$. For $w=z-u$ in $H$, boundedness and \eqref{eq:lower-solution} give
\[
 |w|\le u,\qquad |\Delta u|+|\Delta w|\le Cu^p,
 \qquad w=0\ \text{on }\Pi.
\]
Testing the equation for $u$ with $\chi_R^2u$ in $\R^n$, and
the equation for $w$ with $\chi_R^2w$ in $H$ and using Young's inequality, we obtain the
Caccioppoli-type estimate
\begin{equation}\label{eq:reflection-caccioppoli}
 \int_{\R^n}\chi_R^2|\nabla u|^2\dd
 +\int_H\chi_R^2|\nabla w|^2\dd
 \le C\int_{A_R}\left(u^{p+1}+R^{-2}u^2\right)\dd.
\end{equation}
There is no boundary term on $\Pi$, since $w$ has zero trace there.
It follows that
$$
 \int_{H\cap\operatorname{supp}\nabla\eta_R}
 \left(|\nabla u|^2+|\nabla z|^2\right)\dd
 \le C\int_{A_R}\left(u^{p+1}+R^{-2}u^2\right)\dd.
$$
In particular, recalling the definition of $M(R)$ and the fact that $\operatorname{supp}\nabla\eta_R\subset A_R$, we obtain
\begin{equation}\label{eq:reflection-energy}
 \int_{H\cap\operatorname{supp}\nabla\eta_R}
 \left(|\nabla u|^2+|\nabla z|^2+u^{p+1}\right)\dd
 \le C\left(M(R)+R^{-2}\int_{A_R}u^2\dd\right).
\end{equation}

\medskip
\noindent\emph{Step 3:}
We consider the following vector field
$$
\mathcal{J}_e[u]:=\partial_e u \nabla u -\left(\frac12|\nabla u|^2-F(u)\right)e
$$
where $F$ is the anti-derivative of $f$ given by \eqref{def_f}, i.e. 
$$
F(u) =\frac{u^{2p}}{2p}-\frac{u^{p+1}}{p+1}\, .
$$
A direct computation (see also \cite[Proposition 1]{PucciSerrinVariational} with $a\equiv 0$, $h=e$ and $\mathcal{F}(x,u,p)=\frac{1}{2}|p|^2-F(u)$) gives 
$$
\mathrm{div}\left(\mathcal{J}_e[u] \right)=0\quad \text{ in } \mathbb{R}^n \, . 
$$
In the same way, 
$$
\mathrm{div}\left(\mathcal{J}_e[z] \right)=0\quad \text{ in } H \, ; 
$$
in particular, if we consider 
$$
J:= \mathcal J_e[z]-\mathcal J_e[u]\, ,
$$
we have 
$$
\mathrm{div}(J)=0 \quad \text{ in } H\, . 
$$
On $\Pi$, define
\[
 \delta_+:=\partial_e(u-z),\quad \text{ and } \quad
 \delta_-:=\partial_e(u^\sigma-z)\, .
\]
From \eqref{eq:lower-solution}, both quantities are nonnegative. In addition, since $u,z$ have the same trace
and tangential derivatives on $\Pi$, we have
\begin{align}\label{eq:normal-factorization}
 J\cdot e &=(\partial_ez)^2-\left(\frac12|\nabla z|^2-F(z)\right)-(\partial_eu)^2+\left(\frac12|\nabla u|^2-F(u)\right)\\
 &=\frac12\left((\partial_ez)^2-(\partial_eu)^2\right)\\
 &=\frac12\delta_+\delta_-\ge0\, , \qquad \text{on } \Pi
\end{align}
where we also used the fact that $\partial_eu^\sigma=-\partial_eu$
on $\Pi$. The outward normal to $H$ on $\Pi$ is $-e$, so integration by parts gives
\begin{align}\label{eq:relative-cutoff-identity}
  0\leq    \frac12\int_\Pi\delta_+\delta_-\eta_R\,d\sigma
&=\int_\Pi J\cdot e\,\eta_R \,d\sigma \nonumber\\
&=-\int_H \mathrm{div}(J)\eta_R \dd-\int_H J\cdot\nabla\eta_R\dd \nonumber\\
&= -\int_H J\cdot\nabla\eta_R\dd\, . 
\end{align}
Moreover, since $0<z\le u\le C_0$, we have
$$
|J|\le C(|\nabla u|^2+|\nabla z|^2+u^{p+1}) \quad \text{ in } H\, .
$$
Hence \eqref{eq:relative-cutoff-identity} with \eqref{eq:reflection-energy} yields
\begin{align}\label{eq:reflection-estimate}
 0&\le\frac12\int_{\Pi\cap B_R}\delta_+\delta_-\ds\\
 &\le \frac{C}{R}\int_{H\cap \mathrm{supp}\nabla\eta_R} |J|\dd \\
& \le C\left(\frac{M(R)}{R}+
                  \frac1{R^3}\int_{A_R}u^2\dd\right)\, .
\end{align}
Moreover, by H\"older's inequality,
\begin{equation}\label{eq:reflection-holder}
 \frac1{R^3}\int_{A_R}u^2\dd
 \le C R^{-\kappa}\left(\frac{M(R)}R\right)^{2/(p+1)},
 \qquad
 \kappa=\frac{4-(n-3)(p-1)}{p+1}>0,
\end{equation}
where the last inequality follows from \eqref{eq:range}. Thus the right-hand side of \eqref{eq:reflection-estimate}
tends to zero along $R_j$. By taking the limit $R_j\to\infty$ we obtain
$$
\delta_+\delta_-=0\quad\text{on } \Pi.
$$
The nonnegative functions $w_+=u-z$ and $w_-=u^\sigma-z$ satisfy 
$$
\begin{cases}
-\Delta w_\pm=c_\pm(x)w_\pm\geq -pC_0^{p-1}w_\pm&\quad\text{in }H\\
w_\pm=0&\quad\text{on }\Pi.
\end{cases}
$$
The strong maximum principle and Hopf's lemma imply that each $w_\pm$ either
vanishes identically or has strictly positive inward derivative at
every point of $\Pi$. Since $\delta_+\delta_-=0$, at least one
of these functions vanishes identically. Therefore
\begin{equation}\label{eq:reflection-order}
 u\le u^\sigma\ \text{in }H
 \quad\text{or}\quad u\ge u^\sigma\ \text{in }H.
\end{equation}
This holds for every hyperplane $\Pi$.

\medskip
\noindent\emph{Step 4.} We adapt the standard Hopf-lemma argument used 
in the classical moving planes method. For $x\in\R^n$ and a unit vector $e\in\mathbb{S}^{n-1}$, consider the hyperplane passing through $x$ with normal $e$, $\Pi_{x,e}$. By \eqref{eq:reflection-order}, either $u-u^\sigma$ or $u^\sigma-u$ is nonnegative in the corresponding half-space $H_{x,e}$ and satisfies a
linear equation with bounded coefficient. Therefore, Hopf's lemma implies that either $u- u^\sigma\equiv 0$ or 
$$
0<\pm\partial_e(u-u^\sigma)=\pm2\partial_e u\quad\text{on }\Pi_{x,e}.
$$ 
Thus,
\begin{equation}\label{eq:stationary-plane-symmetry}
 \text{if}\quad\partial_eu(x)=0
 \quad\Longrightarrow\quad
 \Pi_{x,e}\text{ is a symmetry plane of }u.
\end{equation}
Assume that $\partial_e u>0$ in $\R^n$ (the other case is similar). Since $u>0$, there exists $m>0$ such that
$$
u\geq m\quad\text{in }B_1(4ke)\quad\forall k\geq 0, k\in \mathbb{N}.
$$
By construction, for all $k$ the balls $B_1(4ke)$ are disjoint. Moreover, for all large enough $R$, there are at least $R/8$ balls of this type in $A_R$. Thus
$$
\frac{\omega_n m^{p+1}R}{8}\leq \int_{A_R}u^{p+1}\,dx = M(R),
$$
contradicting \eqref{eq:main-hyp}. Therefore, for every $e\in\mathbb{S}^{n-1}$, there exists $\hat x\in\R^n$ such that $\partial_e u(\hat x)=0$ and hence, by \eqref{eq:stationary-plane-symmetry}, $u$ is symmetric with respect to $\Pi_{\hat x,e}$. In particular $\partial_e u\equiv 0$ on $\Pi_{\hat x,e}$. Thus it follows that, for every $i=,\ldots, n$, there exists a hyperplane $\{x_i=const\}$ along which $\partial_i u\equiv 0$. Let $x_0$ be the intersection of those hyperplanes. Then $\nabla u(x_0)=0$, so by \eqref{eq:stationary-plane-symmetry} every hyperplane through $x_0$ is a symmetry plane. Hence
\begin{equation}\label{eq:radial-conclusion}
 u(x)=\varphi(|x-x_0|)
\end{equation}
for some $\varphi:[0,+\infty)\to\R$. The annular hypothesis \eqref{eq:main-hyp} implies $\inf\varphi(r)=0$: otherwise $u$ would have a positive lower bound, giving $M(R)\ge cR^n$, a contradiction.

We claim that $\varphi'(r)\ne0$ for every $r>0$.
Indeed, if $\varphi'(r_0)=0$, then, for any unit vector $e$,
\[
 \partial_e u(x_0+r_0e)=\varphi'(r_0)=0.
\]
By \eqref{eq:stationary-plane-symmetry}, the hyperplane $\Pi_{x_0+r_0e,e}$ is a symmetry plane of $u$. The parallel hyperplane through $x_0$, $\Pi_{x_0,e}$ is also a symmetry plane, by radial symmetry. Composing the two reflections makes $\varphi$ periodic with period $2r_0$. Since $\varphi>0$, there exists $m>0$ such that $\varphi\geq m$. Consequently,
\[
 M(R)=\int_{A_R}u^{p+1}\,dx
 \ge CR^n,
\]
contradicting \eqref{eq:main-hyp}.

Therefore $\varphi'$ has a constant strict sign
on $(0,\infty)$. This sign cannot be positive, since then
$\varphi(r)\ge\varphi(0)>0$, contradicting
$\inf\varphi(r)=0$. Hence $\varphi$ is strictly decreasing,
and $\varphi\to 0$ at infinity. We have obtained a positive radial ground state:
\[
 \varphi''+\frac{n-1}{r}\varphi'+f(\varphi)=0,\qquad
 \varphi'(0)=0,\qquad \lim_{r\to\infty}\varphi(r)=0.
\]
Uniqueness follows, for $n=2$, from 
\cite[Theorem~2]{PucciSerrinUniqueness} and for $n\ge3$ from 
\cite{SerrinTang}. A direct computation shows that the profile $\mathcal U_{x_0}$ in \eqref{eq:profile} is such
a solution and satisfies
\begin{equation}\label{eq:decay-mass}
 \int_{A_R}\mathcal U_{x_0}^{p+1}\dd
 =O\bigl(R^{n-2(p+1)/(p-1)}\bigr)=O(R^{-s}).
\end{equation}
\end{proof}

\section{Proof of Theorem~\ref{thm:sharp}}\label{sec:sharp}

\begin{proof}[Proof of Theorem~\ref{thm:sharp}]
We recall the quantities introduced in
Section~\ref{subsec:riem-gn-critical-rigidity}, written here
in the Euclidean setting: $v=u^{1-p}$,
\begin{equation}\label{eq:P-definition}
 P=\frac D{2p}\left(1+\frac{\Delta v}{p-1}\right)\, ,
\end{equation}
and 
$$
 T=\Hess v-\frac{(p-1)^2}{D}P\, \Id\, ,
$$
where $D$ is given by \eqref{eq:constants}. Since the Euclidean metric has vanishing Ricci curvature,
identity~\eqref{eq:riem-gn-P-divergence} becomes
\begin{equation}\label{eq:P-divergence}
 \diver(u^2\nabla P)
 =u^{p+1}\left[
   \frac D{(p-1)^2}|T|^2+(p-1)P(P-1)
 \right].
\end{equation}
Define the operator
\[
 \mathcal L\varphi
 :=u^{-(p+1)}\diver(u^2\nabla\varphi)
 =v\Delta\varphi
  -\frac{2}{p-1}\langle\nabla v,\nabla\varphi\rangle.
\]
Then \eqref{eq:P-divergence} reads
\begin{equation}\label{eq:P-equation}
 \mathcal LP
 =\frac D{(p-1)^2}|T|^2+(p-1)P(P-1).
\end{equation}
We also recall from \eqref{eq:critical-GN-P-expressions} that
\begin{align}
 P&=\frac D{2pv}
    +\frac D{2(p-1)^2v}|\nabla v|^2,
    \label{eq:P-pressure}\\
 P&=\frac D{2p}u^{p-1}
    +\frac D2\frac{|\nabla u|^2}{u^{p+1}}.
    \label{eq:P-solution}
\end{align}
In particular, $P>0$, and Corollary~\ref{cor:pressure-bounds}
implies that $P$ is bounded and
\[
 v(x)\le C_u(1+|x|^2),
 \qquad
 |\nabla v(x)|\le C_u(1+|x|).
\]
We first prove that $P\le1$. Set
\[
 \psi(x)=\log(1+|x|^2).
\]
Since
\[
 \nabla\psi=\frac{2x}{1+|x|^2},
 \qquad
 \Delta\psi=\frac{2n}{1+|x|^2}
            -\frac{4|x|^2}{(1+|x|^2)^2},
\]
the preceding growth estimates give
\begin{equation}\label{eq:sharp-barrier}
 |\mathcal L\psi|\le C_u
 \qquad\text{in }\R^n.
\end{equation}
Suppose by contradiction that $\sup_{\R^n}P>1$.
Choose $y\in\R^n$ and $\delta>0$ such that
$P(y)>1+2\delta$.
For every $\varepsilon>0$, the function
$P-\varepsilon\psi$ attains a global maximum at some
$x_\varepsilon\in\R^n$, because $P$ is bounded and
$\psi(x)\to\infty$ as $|x|\to\infty$.
Moreover, for $\varepsilon$ small enough,
\[
 P(x_\varepsilon)
 \ge P(x_\varepsilon)-\varepsilon\psi(x_\varepsilon)
 \ge P(y)-\varepsilon\psi(y)>1+\delta.
\]
At this maximum,
\[
 \nabla(P-\varepsilon\psi)(x_\varepsilon)=0,
 \qquad
 \Delta(P-\varepsilon\psi)(x_\varepsilon)\le0.
\]
Since $v>0$, it follows from \eqref{eq:sharp-barrier} that
\[
 \mathcal LP(x_\varepsilon)
 \le\varepsilon\mathcal L\psi(x_\varepsilon)
 \le C_u\varepsilon.
\]
On the other hand, \eqref{eq:P-equation} implies
\[
 \mathcal LP(x_\varepsilon)
 \ge(p-1)P(x_\varepsilon)(P(x_\varepsilon)-1)
 \ge(p-1)(1+\delta)\delta.
\]
Letting $\varepsilon\to 0$ gives a contradiction.
Thus $P\le1$, which is precisely \eqref{eq:sharp-u}
by \eqref{eq:P-solution}.

Suppose now that $P(x_*)=1$ at some point $x_*\in\mathbb{R}^n$, and set
$f=1-P\ge0$. Equation~\eqref{eq:P-equation} gives
\[
 \mathcal Lf-(p-1)Pf
 =-\frac D{(p-1)^2}|T|^2\le0.
\]
Since $P>0$ and $f(x_*)=0$, the strong maximum principle implies that $f\equiv0$, i.e. $P\equiv1$. \eqref{eq:P-equation} gives $T\equiv0$, i.e. 
\[
 \Hess v=\frac{(p-1)^2}{D}\Id.
\]
Integrating and completing the square, we obtain
\[
 v(x)=A+\frac{(p-1)^2}{2D}|x-x_0|^2
 \qquad\text{for some } x_0\in\R^n,\quad A=v(x_0)>0.
\]
Since $P\equiv 1$ and $\nabla v(x_0)=0$, \eqref{eq:P-pressure} gives
\[
A=\frac{D}{2p}\, ,
\]
and 
\[
 u(x)=
 \left[\frac D{2p}+\frac{(p-1)^2}{2D}|x-x_0|^2\right]^{-1/(p-1)}
 =\mathcal U_{x_0}(x),
\]
as claimed.
\end{proof}

\section{Examples and comments}

\subsection{Explicit critical points on nonflat Riemannian manifolds}
\label{sec:nonflat-GN-critical-examples}

The Ricci curvature assumption in
Theorem~\ref{thm:riem-gn-critical-rigidity} is essential. The following
construction gives positive critical points of the intrinsic
Gagliardo--Nirenberg quotient \eqref{eq:critical-GN-quotient} in every dimension $n\ge2$ and for
every exponent in \eqref{eq:range}. The metrics have two ends and
polynomial volume growth of order strictly greater than $n$.

\begin{proposition}\label{ex:explicit-GN-all-dimensions}
Let $n\ge2$, let $1<p<\frac{n}{n-2}$ and define 
\begin{equation}\label{eq:explicit-GN-volume-parameter}
 \delta=\frac{\sqrt{(p+1)^2+\frac{8D(p+1)}{n}}-(p+1)}{2},
 \qquad
 \nu=\frac{2p-\delta}{p-1}.
\end{equation}
where $D$ is given by \eqref{eq:constants}. On
$M=\R\times\mathbb S^{n-1}$, consider the metric
\begin{equation}\label{eq:explicit-GN-metric-profile}
 g=dt^2+(1+t^2)^{\frac{\nu-1}{n-1}}g_{\mathbb S^{n-1}}\,,
\end{equation}
and the function
$$
u(t,\omega)=(1+t^2)^{-\frac{1}{p-1}}\, , \quad \text{ for } (t,\omega)\in M\, .
$$
Then the manifold $(M,g)$ is smooth, complete and nonflat, and
$u\in\mathcal D^p(M)$ is a positive critical point of
$\mathscr G_{n,p}$. Moreover, for every fixed $o\in M$,
\begin{equation}\label{eq:explicit-GN-ball-volume}
 \operatorname{Vol}_g(B_R(o))
 \sim\frac{2|\mathbb S^{n-1}|}{\nu}R^\nu\, ,
 \quad\text{as }R\to\infty.
\end{equation}
In particular, since $\nu>n$ we have $\mathrm{AVR}_g=+\infty$.
\end{proposition}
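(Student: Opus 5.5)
The plan is to verify each claim in Proposition~\ref{ex:explicit-GN-all-dimensions} by direct one-variable computation. Everything reduces to ODEs and integrals in $t$, because both $g$ and $u$ depend only on $t$.

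\emph{Geometry.} Write $g=dt^2+h(t)^2g_{\mathbb S^{n-1}}$ with $h=(1+t^2)^{\frac{\nu-1}{2(n-1)}}$. Since $h$ is smooth and positive on all of $\R$, $M=\R\times\mathbb S^{n-1}$ is a smooth manifold with no tip to close off. The curves $t\mapsto(t,\omega)$ are unit-speed geodesics and $|t(x)-t(y)|\le d_g(x,y)$, so closed bounded sets lie in some $[-T,T]\times\mathbb S^{n-1}$, which is compact; hence $g$ is complete. For nonflatness I will compute the radial Ricci curvature $\operatorname{Ric}(\partial_t,\partial_t)=-(n-1)h''/h$. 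It is negative near $t=0$ because $\nu>1$, which follows from $\delta<D<p+1$, so $g$ is not flat.

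The volume density is $h^{n-1}=(1+t^2)^{\frac{\nu-1}{2}}$, which behaves like $|t|^{\nu-1}$. Since $B_R(o)$ is squeezed between slabs $\{|t|<R\pm C_o\}$, integrating gives \eqref{eq:explicit-GN-ball-volume}. To show $\nu>n$, note that $\nu>n$ is equivalent to $\delta<D$. This holds because the quadratic $q(x)=x^2+(p+1)x-\frac{2D(p+1)}{n}$ has $\delta$ as its positive root and $q(D)=D^2+(p+1)D(1-\frac2n)>0$.

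\emph{Equation.} For radial functions $\Delta_g u=u''+(\nu-1)\frac{t}{1+t^2}u'$. With $u=(1+t^2)^{-\frac1{p-1}}$ one has $u'=-\frac{2}{p-1}t(1+t^2)^{-\frac p{p-1}}$. Writing $t^2=(1+t^2)-1$, each term of $\Delta_g u$ becomes a combination of $(1+t^2)^{-\frac p{p-1}}=u^p$ and $(1+t^2)^{-\frac{2p-1}{p-1}}=u^{2p-1}$. This gives explicit constants $a,b>0$, depending on $p$ and $\nu$, with
\begin{equation*}
 -\Delta_g u=a\,u^{2p-1}-b\,u^p .
\end{equation*}

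\emph{Integrability.} $I_1,I_2,I_3$ are integrals of powers of $(1+t^2)$ times $|\mathbb S^{n-1}|$. For example, the integrand of $I_2$ has exponent $-\frac{p+1+\delta}{2(p-1)}<-\frac12$, and $I_1,I_3$ decay even faster, so $u\in\mathcal D^p(M)$. Each $I_i$ is a Beta function, $\int_\R(1+t^2)^{-\beta}dt=\sqrt\pi\,\Gamma(\beta-\frac12)/\Gamma(\beta)$.

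\emph{Criticality.} By the computation in Step~1 of the proof of Theorem~\ref{thm:riem-gn-critical-rigidity}, which uses only $u\in\mathcal D^p(M)$ and an integration by parts, $u$ is critical if and only if $-\Delta_gu=\mu u^{2p-1}-\lambda u^p$ with $\lambda,\mu$ given by \eqref{l_and_mu}. So I must check $a=\mu$ and $b=\lambda$.

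Multiplying the equation by $u$ and integrating by parts on the complete manifold gives $I_1=aI_3-bI_2$ for free, justified by the decay of $u$ and $u'$ against the volume growth. One more relation is therefore needed. I would obtain it either as a Pohozaev-type identity, by testing with $t\,u'$, or by using the Gamma recursion $\Gamma(z+1)=z\Gamma(z)$ to express $I_1$ and $I_3$ as rational multiples of $I_2$, with coefficients depending on $p$ and $\delta$. Substituting into $\lambda=\frac{(1-\tau)I_1}{\tau I_2}$ and comparing with $b$, together with the value of $\tau$ from \eqref{eq:critical-GN-parameters}, should reduce exactly to $q(\delta)=0$, that is $\delta^2+(p+1)\delta=\frac{2D(p+1)}{n}$. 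This explains the choice of $\delta$ in \eqref{eq:explicit-GN-volume-parameter}. The condition $a=\mu$ then follows from $I_1=aI_3-bI_2$ and the definitions of $\lambda,\mu$, since $\mu I_3-\lambda I_2=I_1$ is an identity.

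The main obstacle is this last algebraic step: tracking the Beta-function ratios and showing that the single remaining condition is precisely the quadratic defining $\delta$.
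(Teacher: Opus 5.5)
Your proposal is correct and follows the paper's proof essentially step for step. The common ingredients are the computation $-\Delta_g u=\frac{2(\delta+p-1)}{(p-1)^2}u^{2p-1}-\frac{2\delta}{(p-1)^2}u^p$, the Beta-function ratio $I_1/I_2=\frac{2}{(p-1)^2k}$ with $k=\frac{p+1+\delta}{2(p-1)}$, and the reduction of $b=\lambda$ to $k\delta=\frac{1-\tau}{\tau}=\frac{D(p+1)}{n(p-1)}$, i.e.\ to the quadratic defining $\delta$. The only real deviation is that you obtain $a=\mu$ from the tested identity $I_1=aI_3-bI_2$ combined with $\mu I_3-\lambda I_2=I_1$, whereas the paper uses $I_3/I_2=\frac{2k-1}{2k}$ to check $(\delta+p-1)(2k-1)=2/\tau$ directly; both routes work. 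One harmless slip: the $I_1$ integrand $t^2(1+t^2)^{-k-1}$ decays at the same rate as that of $I_2$, not faster.
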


\begin{proof}
The number $\delta$ is the positive root of
\begin{equation}\label{eq:explicit-GN-parameter-identity}
 \delta(p+1+\delta)=\frac{2D(p+1)}{n}.
\end{equation}
Since $x\mapsto x(p+1+x)$ is strictly increasing for $x\ge0$ and
\[
 D(p+1+D)>\frac{2D(p+1)}{n},
\]
we have $0<\delta<D$. Thus, from \eqref{eq:constants} we have  
$$
\nu=\frac{2p-\delta}{p-1}>\frac{2p-D}{p-1}=n\, . 
$$
The warping function is
$$
\psi(t):=(1+t^2)^{\frac{\nu-1}{2(n-1)}}\ge1\, , 
$$
so $g$ is smooth and, since $g\geq dt^2+g_{\mathbb{S}^{n-1}}$, it is complete. Direct calculation (see e.g. \cite{Petersen}) gives
\begin{equation}\label{eq:explicit-GN-Ricci}
 \operatorname{Ric}_g(\partial_t,\partial_t)
 =-(n-1)\frac{\psi''}{\psi}
 =-\frac{\nu-1}{(1+t^2)^2}
    \left(1+\frac{\nu-n}{n-1}t^2\right)<0\, . 
\end{equation}
The volume element and the Laplacian of a function $f=f(t)$ are
\begin{equation}\label{eq:explicit-GN-laplacian}
 \begin{gathered}
 d\mu_g=(1+t^2)^{\frac{\nu-1}{2}}\,dt\,d\omega,\\
 \Delta_gf
 =(1+t^2)^{-\frac{\nu-1}{2}}\frac{d}{dt}
       \left((1+t^2)^{\frac{\nu-1}{2}}f'\right)
 =f''+\frac{(\nu-1)t}{1+t^2}f'.
 \end{gathered}
\end{equation}
For $u=(1+t^2)^{-1/(p-1)}$ we have
\begin{align}
     u'&=-\frac{2t}{p-1}(1+t^2)^{-\frac{p}{p-1}} \, ,\\
 \Delta_gu
 &=\frac{2(2p-(p-1)\nu)}{(p-1)^2}u^p
  -\frac{2(2p-(p-1)\nu+p-1)}{(p-1)^2}u^{2p-1}.
\end{align}
Since $2p-(p-1)\nu=\delta$, we have 
\begin{equation}\label{eq:explicit-GN-equation}
 -\Delta_g u=\mu u^{2p-1}-\lambda u^{p} \quad \text{ in } M\, , 
\end{equation}
where 
$$
\lambda=\frac{2\delta}{(p-1)^2}\quad \text{ and }\quad 
 \mu=\frac{2(\delta+p-1)}{(p-1)^2}\, . 
$$
We now prove that $u\in\mathcal{D}^p(M)$ and is a critical point of $\mathscr{G}_{n,p}$. We set (as in the proof of Theorem \ref{thm:riem-gn-critical-rigidity})
\[
 I_1=\int_M|\nabla u|^2\,d\mu_g,\qquad
 I_2=\int_Mu^{p+1}\,d\mu_g,\qquad
 I_3=\int_Mu^{2p}\,d\mu_g,
\]
and a direct computation shows that
\begin{align*}
 I_1&=\frac{4|\mathbb S^{n-1}|}{(p-1)^2}
                  \int_{\R}t^2(1+t^2)^{-k-1}\,dt,\\
 I_2&=|\mathbb S^{n-1}|\int_{\R}(1+t^2)^{-k}\,dt,\\
 I_3&=|\mathbb S^{n-1}|\int_{\R}(1+t^2)^{-k-1}\,dt\, , 
\end{align*}
where 
\[
 k=\frac{p+1}{p-1}-\frac{\nu-1}{2}
   =\frac{p+1+\delta}{2(p-1)}>\frac12\, .
\]
Observe that all three integrals are finite, i.e. $u\in \mathcal D^p(M)$, and one has
\begin{equation}\label{eq:explicit-GN-integral-ratios}
 \frac{I_1}{I_2}=\frac{2}{(p-1)^2k},
 \qquad
 \frac{I_3}{I_2}=\frac{2k-1}{2k}, \qquad \frac{I_1}{I_3}=\frac{4}{(p-1)^2(2k-1)}\, . 
\end{equation}
From \eqref{eq:explicit-GN-parameter-identity} and the definition of $k$ we have 
\[
 k\delta=\frac{D(p+1)}{n(p-1)}=\frac{1-\tau}{\tau}\, , 
\]
and 
$$
(\delta+p-1)(2k-1)=2(1+k\delta)=\frac{2}{\tau}\, , 
$$
where $\tau$ is given by \eqref{eq:critical-GN-parameters}. Consequently,
\begin{equation}\label{l_and_mu_new}
     \frac{(1-\tau)I_1}{\tau I_2}
 =\frac{2\delta}{(p-1)^2}=\lambda,
 \quad
 \frac{I_1}{\tau I_3}
 =\frac{2(\delta+p-1)}{(p-1)^2}=\mu.
 \end{equation}
Thus, from \eqref{l_and_mu} we get that \eqref{eq:explicit-GN-equation} is the Euler--Lagrange equation of $\mathscr G_{n,p}$. 

Fix $o=(0,\omega_0)\in M$, then 
$$
 \operatorname{Vol}_g(B_R(o))\sim 2|\mathbb S^{n-1}|\int_0^R(1+t^2)^{\frac{\nu-1}{2}}\,dt  \sim\frac{2|\mathbb S^{n-1}|}{\nu}R^\nu \quad\text{as } R\to+\infty,
$$
which proves \eqref{eq:explicit-GN-ball-volume} at this center.
Changing the center changes the radius by a fixed constant, so the same asymptotic holds for every fixed $o\in M$. Since $\nu>n$, from the definition of asymptotic volume ratio it follows immediately that $\mathrm{AVR}_g=+\infty$.
\end{proof}

\subsection{Consequences and sharp examples}\label{sec:consequences}

We start by showing Corollary~\ref{cor:finite-mass}. 

\begin{proof}[Proof of Corollary~\ref{cor:finite-mass}]
The equivalence between \textup{(i)} and \textup{(v)} follows from
Theorem~\ref{thm:main}. For every profile \eqref{eq:profile}, a direct computation shows that
$$
 \mathcal U_{x_0}(x)=O\bigl(|x|^{-\frac{2}{p-1}}\bigr),
 \quad \text{ and } \quad 
 |\nabla\mathcal U_{x_0}(x)|
 =O\bigl(|x|^{-\frac{p+1}{p-1}}\bigr)\, ,
$$
as $|x|\to\infty$. Consequently,
$$
 |\nabla\mathcal U_{x_0}(x)|^2+\mathcal U_{x_0}^{p+1}(x)=O\bigl(|x|^{-\frac{2(p+1)}{p-1}}\bigr)
 =O\bigl(|x|^{-n-s}\bigr)\, ,
 $$
 and 
 $$
 \mathcal U_{x_0}^{2p}(x)=O\bigl(|x|^{-\frac{4p}{p-1}}\bigr)
 =O\bigl(|x|^{-n-s-2}\bigr)\, , 
 $$
 where we used the definition of $s$ in \eqref{eq:s}.
Since $s>0$ and $\mathcal U_{x_0}$ is smooth, these estimates show that
\textup{(v)} implies both \textup{(iii)} and \textup{(iv)}.
Moreover, \textup{(iii)} implies \textup{(ii)} by definition.
If \textup{(ii)} holds, then
\[
 0\le M(R)
 \le \int_{\R^n\setminus B_R}u^{p+1}\dd
 \longrightarrow0,
\]
and hence \textup{(i)} follows. It remains to prove that \textup{(iv)} implies \textup{(i)}.
Assume that $u(x)\to0$ as $|x|\to\infty$. Choose $R_0>0$ such that
$$
u^{p-1}\le\frac{1}{2} \quad \text{in }  \R^n\setminus B_{R_0} \, ,
$$ 
then 
\[
 \Delta u=u^p(1-u^{p-1})\ge\frac12u^p \quad \text{in }  \R^n\setminus B_{R_0} \, . 
\]
For $|x|\ge2R_0$, the ball $B_{|x|/4}(x)$ is contained in
$\R^n\setminus B_{R_0}$. The local Keller--Osserman estimate recalled in Section \ref{sec:estimates} on
this ball yields
\[
 u(x)\le c|x|^{-\frac{2}{p-1}}\, . 
\]
Therefore, for all sufficiently large $R$,
\[
 M(R)
 \le C R^{\,n-\frac{2(p+1)}{p-1}}
 =C R^{-s}\, ,
\]
where $s$ is given by \eqref{eq:s}. Since $s>0$, this proves \textup{(i)} and completes the equivalence
of the five conditions.

Finally, let $u$ be a nonnegative classical solution. We may write
\[
 \Delta u=a(x)u,
 \qquad
 a(x)=u(x)^{p-1}-u(x)^{2p-2},
\]
where $a$ is locally bounded. The strong maximum principle,
applied locally, and the connectedness of $\R^n$ imply that either
$u\equiv0$ or $u>0$ everywhere. In the latter case, any of
\textup{(i)}--\textup{(iv)} gives \textup{(v)} by the equivalence
just proved.
\end{proof}

The next proposition shows that the threshold in Theorem \ref{thm:main} is optimal.
\begin{proposition}\label{prop:cylinders}
Let $1\le m<n$, write $x=(y,z)\in\R^m\times\R^{n-m}$, and set
\begin{equation}\label{eq:cylindrical-profile}
 \mathcal U_m(y)=\left(\frac{D_m}{2p}
              +\frac{(p-1)^2}{2D_m}|y|^2\right)^{-\frac{1}{p-1}}\, , 
\end{equation}
where $D_m=2p-m(p-1)$. Then $u(y,z)=\mathcal U_m(y)$ is a positive solution of
\eqref{eq:main} and 
\begin{equation}\label{P_leq1}
P=\frac{D}{2p}\mathcal U_m^{p-1}
 +\frac{D}{2}\frac{|\nabla \mathcal U_m|^2}{\mathcal U_m^{p+1}}\equiv\frac{D}{D_m}(<1)
 \quad\text{in }\R^n\, .     
\end{equation}
Moreover, 
\begin{equation}\label{eq:cylindrical-mass}
\int_{A_R}\mathcal U_m^{p+1}\, dx\sim\omega_{n-m}(2^{n-m}-1)
          \left(\int_{\R^m}\mathcal U_m^{p+1}\,dy\right)R^{n-m} \, ,
 \quad\text{as }R\to\infty\, . 
\end{equation}
In particular, for $m=n-1$ the annular mass $M(R)$ is linear.
\end{proposition}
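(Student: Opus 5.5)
The statement splits into three independent checks: that $\mathcal U_m$ solves \eqref{eq:main} in $\R^n$, the value of $P$ in \eqref{P_leq1}, and the annular mass asymptotics \eqref{eq:cylindrical-mass}.

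\emph{The equation.} Since $u(y,z)=\mathcal U_m(y)$ does not depend on $z$, we have $\Delta_{\R^n}u=\Delta_{\R^m}\mathcal U_m$. So the equation reduces to \eqref{eq:main} in $\R^m$ with exponent $p$. Here I avoid relying on Theorem~\ref{thm:main} in dimension $m$, because the range condition $p<\frac{m}{m-2}$ may not hold in the form used there. Instead I verify directly, with the pressure $v=\mathcal U_m^{1-p}=\frac{D_m}{2p}+\frac{(p-1)^2}{2D_m}|y|^2$, that the pressure equation \eqref{eq:critical-GN-pressure-equation} holds. We have $\Delta v=\frac{m(p-1)^2}{D_m}$ and $|\nabla v|^2=\frac{(p-1)^4}{D_m^2}|y|^2$. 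Then
\[
 v\Delta v-\frac{p}{p-1}|\nabla v|^2-(p-1)(1-v)
\]
is a polynomial in $|y|^2$. Its constant term is $\frac{m(p-1)^2}{2p}-(p-1)+\frac{(p-1)D_m}{2p}$, which vanishes by the definition $D_m=2p-m(p-1)$. Its $|y|^2$ coefficient is $\frac{(p-1)^3}{2D_m}\cdot\frac{1}{D_m}\bigl(m(p-1)-2p+D_m\bigr)$, which also vanishes. Since \eqref{eq:critical-GN-pressure-equation} is equivalent to \eqref{eq:main} for $v=u^{1-p}$, this proves the equation. Positivity requires $D_m>0$, i.e.\ $2p>m(p-1)$. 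This follows from $m<n$ and $D=2p-n(p-1)>0$, which give $D_m=D+(n-m)(p-1)>D>0$.

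\emph{The value of $P$.} Use \eqref{eq:critical-GN-P-expressions} with the $n$-dimensional constant $D$, namely $P=\frac{D}{2pv}+\frac{D}{2(p-1)^2v}|\nabla v|^2$. Substituting the above gives
\[
 P=\frac{D}{v}\Bigl(\frac1{2p}+\frac{(p-1)^2|y|^2}{2D_m^2}\Bigr)=\frac{D}{D_m}\cdot\frac{1}{v}\,v=\frac{D}{D_m}.
\]
This is $<1$ because $D_m>D$.

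\emph{The annular mass.} Write $g(y)=\mathcal U_m^{p+1}(y)$. The key check is integrability on $\R^m$: $g\sim C|y|^{-2(p+1)/(p-1)}$, and $\frac{2(p+1)}{p-1}>m$ is equivalent to $2p+2>m(p-1)$, which holds since $D_m>0$. By Fubini,
\[
 M(R)=\int_{\R^m}g(y)\,\bigl|\{z:R^2\le|y|^2+|z|^2<4R^2\}\bigr|\,dy.
\]
Scaling $y=R\eta$ in the slice measure, the inner measure divided by $R^{n-m}$ tends to $\omega_{n-m}(2^{n-m}-1)$ for each fixed $y$. It is also bounded by $\omega_{n-m}2^{n-m}R^{n-m}$. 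Dominated convergence (with $g\in L^1$) then yields \eqref{eq:cylindrical-mass}, and $m=n-1$ gives linear growth.

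I expect the only subtle point to be noting $D_m>D>0$. It is used both for positivity of the profile and for integrability of $g$; everything else is direct computation.
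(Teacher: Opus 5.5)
Your proof is correct and follows the paper's route. Both arguments verify the equation and $P\equiv D/D_m$ by direct computation, get $\mathcal U_m^{p+1}\in L^1(\R^m)$ from $\frac{2(p+1)}{p-1}>m$, and conclude with Fubini plus dominated convergence; the paper slices balls $B_R$ and subtracts, whereas you slice the annulus directly. Your caution about the range condition in dimension $m$ is unnecessary, since $\frac{m}{m-2}>\frac{n}{n-2}>p$ for $3\le m<n$ and there is no upper constraint for $m\le 2$, but your direct verification does not depend on it anyway.
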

\begin{proof}
Since $D_m=D+(n-m)(p-1)>0$, the profile is well defined. A direct computation shows that $\mathcal U_m$ solves \eqref{eq:main} for every $1\leq m<n$. Moreover, \eqref{P_leq1} easily follows. Since, from \eqref{eq:s}, $2(p+1)/(p-1)=n+s>m$, we have $\mathcal U_m^{p+1}\in L^1(\R^m)$.
By Fubini,
\[
\int_{B_R}\mathcal U_m^{p+1}\dd
 =\omega_{n-m}\int_{\R^m}\mathcal U_m(y)^{p+1}
       \left(R^2-|y|^2\right)_+^{\frac{n-m}{2}}\,dy.
\]
Dominated convergence gives \eqref{eq:cylindrical-mass}. 
\end{proof}

\begin{remark}\label{ex:periodic}
The solution $u\equiv1$ has $P=D/(2p)<1$ and $M(R)$ proportional
to $R^n$. Positive nonconstant periodic solutions can also be extended
from one variable. The equation
\[
 w''=w^p-w^{2p-1}
\]
has conserved energy
\[
 \frac12(w')^2+\frac{w^{2p}}{2p}-\frac{w^{p+1}}{p+1}.
\]
Its potential has a strict local minimum at $w=1$, with second
derivative $p-1>0$. Nearby energy levels give positive periodic
oscillations bounded away from zero. The corresponding solutions
$u(x)=w(x_1)$ have annular mass comparable to $R^n$.
\end{remark}


\subsection{Solutions of the normalized equation on Riemannian manifolds}
\label{sec:riemannian}

The following examples show that positive sectional curvature and an
asymptotic volume ratio arbitrarily close to one do not force rigidity
for positive solutions of the normalized equation, even under decay
of the annular mass.
\begin{proposition}
\label{prop:positive-avr-counterexamples}
Let $n\ge2$, let $1<p<\frac{n}{n-2}$ and let $c\in(0,1)$. The metric
\begin{equation}\label{eq:positive-avr-metric}
 g_c=dr^2+\Psi_c(r)^2g_{\mathbb S^{n-1}},
 \qquad \Psi_c(r)=cr+(1-c)\tanh r,
\end{equation}
on $\R^n$ is smooth and complete, has strictly positive sectional
curvature, and satisfies
\begin{equation}\label{eq:positive-avr-value}
 \operatorname{AVR}_{g_c}=c^{n-1}.
\end{equation}
Moreover, the equation 
$$
-\Delta_{g_c}u=u^{2p-1}-u^p \quad \text{ in } \mathbb{R}^n
$$ 
admits a smooth positive radial solution $u_c$ such that
\begin{equation}\label{eq:uc-decay}
 u_c'(r)<0\quad \text{ for } r>0 \, , \quad \text{and} \quad \lim_{r\to\infty}u_c(r)=0\, .
\end{equation}
Finally, for balls centered at the pole,
\begin{equation}\label{eq:uc-annular}
 \int_{B_{2R}\setminus B_R}u_c^{p+1}\,d\mu_{g_c}
 =O(R^{-s})\, .
\end{equation}
where $s$ is given by \eqref{eq:s}.
\end{proposition}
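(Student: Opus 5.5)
The plan has two parts: the geometry of $g_c$ is read off from the warping function $\Psi_c$, and the solution $u_c$ is produced by a shooting argument for the radial ODE, with decay handled by a Keller--Osserman bound.

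\emph{Geometry of $g_c$.} We have $\Psi_c(0)=0$ and $\Psi_c'(0)=c+(1-c)=1$. Moreover $\Psi_c$ is odd, since it is a combination of $r$ and $\tanh r$. These are the standard conditions for $dr^2+\Psi_c^2g_{\mathbb S^{n-1}}$ to extend smoothly across the pole. Completeness is immediate because $r$ is the distance from the pole.

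For the curvature, we use the usual formulas for warped products. The radial planes have curvature $-\Psi_c''/\Psi_c$, and here $\Psi_c''=(1-c)\tanh''r<0$ for $r>0$. The tangential planes have curvature $(1-\Psi_c'^2)/\Psi_c^2$, and $\Psi_c'=c+(1-c)\operatorname{sech}^2r\in(c,1)$ for $r>0$. At the pole both expressions have positive limits, because $\Psi_c=r-\tfrac{1-c}{3}r^3+\dots$. Hence the sectional curvature is strictly positive everywhere, and in particular $\operatorname{Ric}\ge0$.

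Since $\Psi_c(r)=cr+O(1)$, we get $\operatorname{Vol}(B_R)=|\mathbb S^{n-1}|\int_0^R\Psi_c^{n-1}\sim c^{n-1}\omega_nR^n$. This gives \eqref{eq:positive-avr-value}.

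\emph{Existence by shooting.} Radial solutions solve
\[
u''+(n-1)\frac{\Psi_c'}{\Psi_c}u'+f(u)=0,\qquad u(0)=a>0,\quad u'(0)=0,
\]
with $f$ as in \eqref{def_f}. Local existence near the singular point $r=0$ is standard. The energy $E=\tfrac12u'^2+F(u)$ satisfies $E'=-(n-1)\tfrac{\Psi_c'}{\Psi_c}u'^2\le0$, so solutions stay bounded and exist while positive. Define
\[
\mathcal N=\{a:\ u_a \text{ reaches } 0\},\qquad
\mathcal G=\{a:\ u_a'>0 \text{ somewhere before } u_a \text{ vanishes}\}.
\]
Both sets are open by continuous dependence. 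When $u$ hits $0$ we have $u'<0$ there, since $u\equiv0$ is the only solution with $u=u'=0$.

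For $a\in(0,1)$ we have $f(a)<0$, so $u''(0)=-f(a)/n>0$ and $a\in\mathcal G$. For $a$ large, we rescale $U(\rho)=a^{-1}u(a^{1-p}\rho)$. Near the pole the metric is asymptotically Euclidean at this scale, and $U$ converges locally to the radial solution of $-\Delta U=U^{2p-1}$ with $U(0)=1$. This equation is subcritical, so that solution has a finite zero with transversal slope (Gidas--Spruck, as used in Lemma~\ref{lem_universal}). Hence $a\in\mathcal N$. By connectedness of $(0,\infty)$ there is $a^*\notin\mathcal N\cup\mathcal G$, and $u_c:=u_{a^*}$ is positive with $u_c'\le0$.

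Strict monotonicity then follows: $u'=0$ at some $r>0$ together with $u''\ge0$ forces $f(u)\le0$, and one checks the configuration is excluded exactly as in the shooting classification.

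\emph{The limit is $0$.} Since $u_c$ is monotone, it has a limit $\ell\ge0$, and $\ell$ must be a zero of $f$, so $\ell\in\{0,1\}$. The main obstacle is excluding $\ell=1$. Linearizing at $1$ gives $w''+(n-1)\tfrac{\Psi_c'}{\Psi_c}w'+(p-1)w\approx0$ with $f'(1)=p-1>0$ and damping coefficient of order $1/r$. I would use a Sturm comparison, or a Prüfer angle argument, to show that $u-1$ must oscillate. This contradicts strict monotonicity, so $\ell=0$, which gives \eqref{eq:uc-decay}.

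\emph{Decay rate and annular mass.} Once $u_c\to0$, for $r\ge r_0$ we have $\Delta u_c\ge\tfrac12u_c^p$. Since $\operatorname{Ric}\ge0$, I would run the local Keller--Osserman estimate recalled in Section~\ref{sec:estimates} on geodesic balls $B_{r/4}(x)$, or directly on the radial ODE using that $\Psi_c^{n-1}$ is increasing. This should give $u_c\le Cr^{-2/(p-1)}$. Then
\[
\int_{B_{2R}\setminus B_R}u_c^{p+1}\,d\mu_{g_c}\le C\int_R^{2R}r^{-\frac{2(p+1)}{p-1}}r^{n-1}\,dr=O(R^{-s}),
\]
which is \eqref{eq:uc-annular}.
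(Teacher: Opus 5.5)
\textbf{There is a genuine gap in the shooting step.} Your geometric part matches the paper. Your ODE decay bound is also sound: for large $r$, $u''\ge -f(u)\ge\tfrac12 u^p$ because $q'>0$ and $u'<0$. The existence route differs from the paper's. The paper uses the mountain pass theorem in the radial energy space, Palais' symmetric criticality, and a Strauss-type radial bound, so $u_c\in L^{p+1}$ and $u_c\to0$ come for free.

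The problem is the constant solution $u\equiv1$. It corresponds to $a=1$, and $1$ lies in neither $\mathcal N$ nor $\mathcal G$. So ``by connectedness there is $a^*\notin\mathcal N\cup\mathcal G$'' may just return $a^*=1$, and nothing later rules this out:
\begin{itemize}
\item Your strict-monotonicity step reduces to $u(r_1)=1$, $u'(r_1)=0$, which is exactly $u\equiv1$.
\item A Sturm or Pr\"ufer argument says nothing about $u-1\equiv0$.
\item The connectedness argument also needs $\mathcal N\cap\mathcal G=\emptyset$, which you never prove.
\end{itemize}

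\textbf{The fix: use the energy $E$ to control the shooting sets.}
\begin{itemize}
\item \emph{$\mathcal N$ sits to the right of $1$.} If $u_a$ first vanishes at $r_0$, then $F(a)=E(0)\ge E(r_0)=\tfrac12u_a'(r_0)^2>0$. Hence $\mathcal N\subset(\beta,\infty)$ with $\beta^{p-1}=\frac{2p}{p+1}$, so $\beta>1$.
\item \emph{Disjointness.} Suppose $u_a'>0$ somewhere before $u_a$ vanishes. Either $a<1$, so $E(0)=F(a)<0$. Or $a>1$, so $u_a'<0$ near the pole and $u_a$ has an interior local minimum with $u_a'=0\le u_a''$; there $f(u_a)\le0$, so $u_a\le1$ and $E=F(u_a)<0$. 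In both cases $E<0$ from then on, and $u_a$ can never reach zero.
\item \emph{Choice of $a^*$.} Set $a^*:=\inf\{b:(b,\infty)\subset\mathcal N\}$; this set is nonempty by your rescaling argument. Openness of $\mathcal N$ gives $a^*\notin\mathcal N$. Openness of $\mathcal G$ together with disjointness gives $a^*\notin\mathcal G$. Finally $a^*\ge\beta>1$.
\end{itemize}

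With this choice, strict monotonicity follows from ODE uniqueness as you indicate, since $u(0)=a^*\neq1$.

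You can also exclude $\ell=1$ without oscillation theory. If $E_{a^*}(R)<0$ for some $R$, continuous dependence gives $E_a(R)<0$, with $u_a>0$ on $[0,R]$, for all $a$ near $a^*$. Then such $a\notin\mathcal N$, contradicting that points of $\mathcal N$ accumulate at $a^*$ from above. Hence $E_{a^*}\ge0$ everywhere. But $u\to1$ would force $\liminf u'^2\ge-2F(1)>0$, which is incompatible with convergence. Your Sturm-comparison argument for $\ell\neq1$ also works once $a^*\neq1$ is secured.
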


\begin{proof}
The function $\Psi_c$ is smooth and odd, with
$$
\Psi_c(r)=r-(1-c)\frac{r^3}{3}+O(r^5)\, , 
$$
as $r\to 0$, so the metric extends smoothly
across the pole and is complete. Moreover, since 
$$
0<\Psi_c'<1\quad \text{ and } \quad \Psi_c''<0 \, \text{ on } (0,\infty)\, ,
$$ 
we have (see e.g. \cite{Petersen})
\[
 \mathrm{Sec}_{\mathrm{rad}}=-\frac{\Psi_c''}{\Psi_c}>0,
 \qquad
 \mathrm{Sec}_{\mathrm{tan}}=\frac{1-(\Psi_c')^2}{\Psi_c^2}>0,
\]
where the second formula is relevant when $n\ge3$. At the pole,
the sectional curvature is $2(1-c)$. Finally, 
$\Psi_c(r)/r\to c$, as $r\to+\infty$, gives \eqref{eq:positive-avr-value}.

Existence follows from the mountain pass theorem of
Ambrosetti--Rabinowitz \cite[Theorem~2.1]{AmbrosettiRabinowitz}.
We verify its hypotheses in the radial energy space. Write
$d\mu=d\mu_{g_c}$ and set
\[
 X_c=\bigl\{v\in L^{p+1}(d\mu):
       v\text{ is radial},\ \nabla v\in L^2(d\mu)\bigr\},
\]
with $\|v\|_{X_c}=\|\nabla v\|_{L^2(d\mu)}+\|v\|_{L^{p+1}(d\mu)}$. For radial functions, $|\nabla v|=|v'|$, and
\[
 c^{n-1}r^{n-1}\le q(r):=\Psi_c(r)^{n-1}\le r^{n-1}.
\]
Thus $\|\cdot\|_{X_c}$ is equivalent to the Euclidean one. Since
\[
 p+1<2p<2^*,
\]
the radial compactness consequence of Lions's lemma
\cite[Lemma~I.1]{LionsCompactnessII} gives
\[
 X_c\hookrightarrow\hookrightarrow L^{2p}(d\mu).
\]
Consider the $C^1$ functional
\[
 \mathcal I_c(v)
 =\int_{\R^n}
   \left(\frac12|\nabla v|^2
         +\frac{|v|^{p+1}}{p+1}
         -\frac{(v_+)^{2p}}{2p}\right)\,d\mu.
\]
The continuous embedding into $L^{2p}(d\mu)$ and
$2<p+1<2p$ imply that $\mathcal I_c$ is positive on a sufficiently
small sphere about zero, whereas
$\mathcal I_c(t\varphi)\to-\infty$ as $t\to\infty$ for every
nonzero nonnegative radial $\varphi\in C_c^\infty(\R^n)$.
Moreover,
\[
 2p\,\mathcal I_c(v)-\mathcal I_c'(v)[v]
 =(p-1)\int_{\R^n}
       \left(|\nabla v|^2+\frac{|v|^{p+1}}{p+1}\right)\,d\mu
\]
shows that Palais--Smale sequences are bounded in $X_c$. By the compact embedding, every bounded Palais--Smale sequence
$\{v_j\}$ admits a subsequence such that $v_j\rightharpoonup v$
in $X_c$ and $v_j\to v$ in $L^{2p}(d\mu)$.
Testing $\mathcal I_c'(v_j)-\mathcal I_c'(v)$ against $v_j-v$
and using
\[
 (|a|^{p-1}a-|b|^{p-1}b)(a-b)
 \ge 2^{1-p}|a-b|^{p+1},
\]
we obtain
\[
 \|\nabla(v_j-v)\|_{L^2(d\mu)}^2
 +2^{1-p}\|v_j-v\|_{L^{p+1}(d\mu)}^{p+1}=o(1).
\]
Thus $v_j\to v$ in $X_c$, proving the Palais--Smale condition. The mountain pass theorem therefore provides a nonzero radial
critical point $u_c\in X_c$.

Since $g_c$ is rotationally symmetric, the functional $\mathcal I_c$
is invariant under the natural action of the compact group $O(n)$.
Palais' principle of symmetric criticality \cite{Palais1979}
therefore implies that the radial critical point $u_c$ is a critical
point with respect to arbitrary variations, hence a weak solution of
\[
 -\Delta_{g_c}u_c+|u_c|^{p-1}u_c=(u_c)_+^{2p-1}.
\]
Testing with its negative part gives $u_c\ge0$. Local elliptic
regularity and the strong maximum principle yield
$u_c\in C^\infty(\R^n)$ and $u_c>0$.
By H\"older's inequality, we obtain
\[
 |u_c(r)|^{(p+3)/2}
 \le C r^{1-n}\|\nabla u_c\|_{L^2(d\mu)}
                  \|u_c\|_{L^{p+1}(d\mu)}^{(p+1)/2},
\]
which implies $u_c(r)\to0$ as $r\to\infty$.

To establish monotonicity and the required decay, set
\[
 f(t)=t^{2p-1}-t^p,
 \qquad
 F(t)=\frac{t^{2p}}{2p}-\frac{t^{p+1}}{p+1}.
\]
The radial equation reads
\begin{equation}\label{eq:shooting-ode}
 (qu_c')'+qf(u_c)=0,\qquad u_c'(0)=0,
\end{equation}
and its energy satisfies
\[
 E(r):=\frac12u_c'(r)^2+F(u_c(r)),
 \qquad
 E'(r)=-\frac{q'(r)}{q(r)}u_c'(r)^2\le0.
\]
Since $u_c(r)\to0$ as $r\to\infty$ and $\int_0^\infty q\,u_c'^2\,dr<\infty$,
we have $E(r)\to0$, hence $E\ge0$.
In particular $F(u_c(0))\geq 0$ and hence $u_c(0)>1$, so $u_c'<0$ near the pole.
A first critical point at a positive radius would satisfy
$u_c''\ge0$, hence $u_c\le1$ by \eqref{eq:shooting-ode};
at that point $E=F(u_c)<0$, a contradiction.
This proves \eqref{eq:uc-decay}.

For sufficiently small $t>0$,
$F(t)\le-t^{p+1}/(2(p+1))$. Consequently, $E\ge0$ and $u_c'<0$
give, for all sufficiently large $r$,
\begin{equation}\label{eq:uc-KO}
 -u_c'(r)\ge\frac{u_c(r)^{(p+1)/2}}{\sqrt{p+1}},
 \qquad
 u_c(r)\le Cr^{-2/(p-1)},
\end{equation}
where the second estimate follows by integration.
Using $\Psi_c(r)\le r$, we conclude that
\[
 \int_{B_{2R}\setminus B_R}u_c^{p+1}\,d\mu_{g_c}
 \le C\int_R^{2R}r^{n-1-2(p+1)/(p-1)}\,dr
 \le CR^{-s},
\]
as claimed.
\end{proof}

\

\subsection*{Acknowledgements} The first and third authors are members of the GNSAGA, Gruppo Nazionale per le Strutture Algebriche, Geometriche e le loro Applicazioni of INdAM. The second author is a member of GNAMPA, Gruppo Nazionale per l'Analisi Matematica, la Probabilit\`a e le loro Applicazioni of INdAM.

\

\subsection*{AI disclosure}The authors acknowledge the use of AI tools during the preparation of this paper. All mathematical arguments and proofs in the final manuscript were checked and written by the authors.


\

\subsection*{Conflict of interest}
The authors declare that they have no conflict of interest.

\end{document}